\numberwithin{equation}{subsection}
\theoremstyle{plain}
\newtheorem{thm}{Theorem}[section]
\newtheorem{lemma}[thm]{Lemma}
\newtheorem{prop}[thm]{Proposition}
\newtheorem{cor}[thm]{Corollary}
\theoremstyle{definition}
\newtheorem{conj}[thm]{Conjecture}
\theoremstyle{remark}
\newtheorem{rmk}[thm]{Remark}
\newtheorem{question}[thm]{Question}
\newcommand{\bfrak}{\mathfrak{b}}
\newcommand{\Mb}{\mathbb{M}}
\newcommand{\Nb}{\mathbb{N}}
\newcommand{\Qb}{\mathbb{Q}}
\newcommand{\Zb}{\mathbb{Z}}
\newcommand{\Rb}{\mathbb{R}}
\newcommand{\Hc}{{\mathcal{H}}}
\newcommand{\Mp}{{\mathrm{Mp}}}
\newcommand{\Ac}{{\mathcal{A}}}
\newcommand{\Bsc}{{\mathscr{B}}}
\newcommand{\Dc}{{\mathcal{D}}}
\newcommand{\Cb}{\mathbb{C}}
\newcommand{\ef}{\mathfrak{e}}
\newcommand{\SO}{{\mathrm{SO}}}
\newcommand{\SL}{{\mathrm{SL}}}
\newcommand{\PSL}{{\mathrm{PSL}}}
\newcommand{\GL}{{\mathrm{GL}}}
\newcommand{\Spin}{{\mathrm{Spin}}}
\newcommand{\Iso}{{\mathrm{Iso}}}
\renewcommand{\Re}{\mathrm{Re}}
\renewcommand{\Im}{\mathrm{Im}}
\newcommand{\ebf}{{\mathbf{e}}}
\newcommand{\pmat}[4]{\begin{pmatrix}
                 #1 & #2\\
                 #3 & #4
\end{pmatrix}}
\newcommand{\smat}[4]{\left(\begin{smallmatrix}
                 #1 & #2\\
                 #3 & #4
\end{smallmatrix}\right)}
\newcommand{\tth}{\textsuperscript{th }}
\newcommand{\tpsi}{{\tilde{\psi}}}
\newcommand{\phii}{\phi}
\newcommand{\phib}{\Phi}
\newcommand{\red}[1]{{\color{red}#1}}
\newcommand{\blue}[1]{{\color{blue}#1}}
\begin{document}
\title{A Magnetic Modular Form}
\date{\today}
\author[Yingkun Li and Michael Neururer]{Yingkun Li and Michael Neururer}
\address{Fachbereich Mathematik,
Technische Universit\"at Darmstadt, Schlossgartenstrasse 7, D--64289
Darmstadt, Germany}
\email{li@mathematik.tu-darmstadt.de}
\email{neururer@mathematik.tu-darmstadt.de}
\thanks{Both authors are partially funded by the LOEWE research unit USAG and the DFG grant BR-2163/4-2. The first author is also partially supported by an NSF postdoctoral fellowship.}

\begin{abstract}
In this paper, we prove a conjecture of Broadhurst and Zudilin concerning a divisibility property of the Fourier coefficients of a meromorphic modular form using the generalization of the Shimura lift by Borcherds and Hecke operators on vector-valued modular forms developed by Bruinier and Stein. Furthermore, we construct a family of meromorphic modular forms with this property. 
\end{abstract}

\maketitle

\section{Introduction.}
Let $f$ be a weakly holomorphic modular form with the Fourier expansion $\sum_{n \gg -\infty} a(n) q^n_z$, where $q_z := \ebf(z) := e^{2\pi i z}$ for $z$ in the upper-half complex plane $\Hc$. 
Suppose $f$ has weight 2, level 1 and integral Fourier coefficients. Then it possesses the following divisibility property
\begin{equation}
  \label{eq:divprop}
  n \mid a(n) \text{ for all } n \in \Nb.
\end{equation}
Indeed, it is the image of a polynomial in the Klein $j$-invariant with integral coefficients under the differential operator $q\frac{d}{dq}$.
More generally, one can apply this derivative $k-1$ times to weakly holomorphic modular forms of weight $2-k$ to produce forms of weight $k$ with this property (see e.g.\ \cite{KazalickiScholl}).
The same phenomenon in the half-integral weight case has also been studied \cite{Edixhoven05, Jenkins05, ZhangYC16}.

In \cite{Ausserlechner2016}, Ausserlechner studied how the output voltage of a Hall plate is affected by the shape of the plates and sizes of the contacts, where he encountered a double integral generalizing the classical elliptic integral used to evaluate the arithmetic geometric mean.
In \cite{BZ17}, Broadhurst and Zudilin studied this integral $I_2(f)$ in detail and showed that it satisfies 
$$
I_2(f) = I_2 \left( \frac{1-f}{1+f} \right)\text{ for } f \in [0, 1],
$$
which was conjectured by Ausserlechner and also proved by Glasser and Zhou \cite{GlasserZhou18}. 
After applying a modular parametrization, they also showed that $I_2(f)$ satisfies an inhomogeneous differential equation, whose constant term is the following modular form
\begin{equation}
  \label{eq:phi}
  \begin{split}
  \phii(z) &:=  (\eta(2z) \eta(4z))^4 \frac{1 - 96 \psi(2z) + 256\psi(2z)^2}{(1 + 16\psi(2z))^2} \\
&= \sum_{n\in 2\Nb-1} a(n) q^{n}_z = q_z - 132q_z^{3} + 5630q_z^{5} - 189672q_z^{7} + 5768181q_z^{9} + O(q^{10})
.    
  \end{split}
\end{equation}
Here, 
\begin{equation}
  \label{eq:psi}
\psi(z) := \frac{\eta(z)^8 \eta(4z)^{16} }{\eta(2z)^{24}}
\end{equation}
is a Hauptmodul on the modular curve $X_0(4)$.
The function $\phii$ is a meromorphic modular form of weight 4 on $\Gamma_0(8)$ and has double poles at $z_\pm := \frac{\pm 1 + i}{4}$.
The numbers $a(n)$ are a priori integers. From numerical computations, Broadhurst and Zudilin made the following conjecture.
\begin{conj}[Conjecture 1 of \cite{BZ17}]
\label{conj:main}
The meromorphic modular form $\phi(z)$ satisfies \eqref{eq:divprop}.
\end{conj}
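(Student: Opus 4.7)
The strategy is to realize $\phi$ as a regularized Shimura--Borcherds theta lift of a weakly holomorphic vector-valued modular form $F$ and then exploit the Hecke-equivariance of this lift, in the form developed by Bruinier and Stein, to transport arithmetic information from $F$ down to the divisibility $n \mid a(n)$ on the integer-weight side.

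First, I would construct a weakly holomorphic vector-valued modular form $F$ of an appropriate half-integral weight for the Weil representation attached to a rank-$3$ lattice of signature $(1,2)$ associated with $\Gamma_0(8)$, engineered so that its regularized theta lift $\Phi(F)$ is a meromorphic modular form of weight $4$ on $\Gamma_0(8)$ with double poles precisely at the CM points $z_\pm = (\pm 1 + i)/4$. The principal part of $F$ in the components indexed by the cosets corresponding to $z_\pm$ is dictated by the pole divisor of $\phi$. Since the space of meromorphic weight-$4$ forms on $\Gamma_0(8)$ with prescribed divisor is finite-dimensional, matching a handful of leading Fourier coefficients of $\Phi(F)$ against the expansion in \eqref{eq:phi} will force the identification $\Phi(F) = \phi$ up to a normalizing constant. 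The outcome is an explicit divisor-sum formula of Bruinier type for $a(n)$ in terms of the Fourier coefficients of $F$.

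With the lift realization established, I would then invoke the Bruinier--Stein Hecke operators on vector-valued modular forms: the operator $T(p^2)$ acting on $F$ intertwines with the classical Hecke operator $T_p$ on $\Phi(F) = \phi$. Feeding $F$ through this intertwining yields recursions expressing $a(pn)$ in terms of $a(n)$, $a(n/p)$, and $p$-powers. A $p$-adic inspection of these recursions, combined with the base case $a(1) = 1$ and the multiplicative behaviour of the Fourier coefficients, then lets one argue inductively that every prime factor of $n$ divides $a(n)$, giving $n \mid a(n)$.

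The principal obstacle is the first step. The ramification at $2$ makes the relevant Weil representation heavier than in the prime-level case, and pinning down the correct input $F$ whose regularized theta lift is exactly $\phi$ (with the right principal parts, correct scaling, and correct holomorphic contribution from Eisenstein-like pieces) requires a careful structural analysis rather than a guess-and-check. Once this identification is in hand, however, the propagation of the divisibility through Hecke theory is essentially formal and parallels the classical weight-$2$ level-$1$ story where $n \mid a(n)$ follows from $\phi$ being $q\tfrac{d}{dq}$ of a weight-$0$ weakly holomorphic form.
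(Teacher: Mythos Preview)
Your high-level plan matches the paper's: realize $\phi$ as a Borcherds lift $\phib(z,G_1)$ with $G_1\in M^!_{5/2,\varrho}$, obtain the divisor-sum expression $-4a(n)=(-1)^{(n-1)/2}\sum_{r\mid n} r\,c(G_1,(n/r)^2)$, and bring in the Bruinier--Stein operators $T_{p^2}$. The gap is in your third step. The form $\phi$ is \emph{not} a Hecke eigenform and its coefficients $a(n)$ are not multiplicative: applying $T_{p^2}$ to $G_1$ (equivalently $T_p$ to $\phi$) produces a genuinely new meromorphic form with poles along a different Heegner divisor, so there is no closed recursion ``$a(pn)$ in terms of $a(n),a(n/p)$'' to inspect $p$-adically, and the phrase ``multiplicative behaviour of the Fourier coefficients'' does not apply.

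What the paper does instead is stay entirely on the half-integral side. One constructs a $\Zb$-basis $\{G_d:d\equiv1,3,7\bmod8\}$ of $\Mb^!_{5/2,\varrho}$, each $G_d$ pinned down by its principal part $\mathfrak q^{-d}$. Reading off the principal part of $G_1\mid T_{p^2}$ then forces
\[
G_1\mid T_{p^2}=p^3\,G_{p^2}+p\Bigl(\tfrac{-1}{p}\Bigr)G_1,
\]
and the crucial input is that the auxiliary form $G_{p^2}$ has \emph{integral} Fourier coefficients. Comparing $n$-th coefficients yields $c(G_1,p^2n)\equiv p\bigl((\tfrac{-1}{p})-(\tfrac{n}{p})\bigr)c(G_1,n)\pmod{p^3}$, hence $m\mid c(G_1,m^2)$ for every odd $m$. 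Fed back into the divisor sum, each term $r\cdot c(G_1,(n/r)^2)$ is then divisible by $r\cdot(n/r)=n$, and $n\mid a(n)$ drops out. Your sketch is missing the construction and integrality of these companion forms $G_{p^2d}$; without them the Hecke relation cannot be converted into the required congruence on the coefficients of $G_1$.
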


On the one hand, the conjecture above seems like a natural and expected extension of \eqref{eq:divprop} for weakly holomorphic forms. 
On the other hand, $p^3 \nmid a(p)$ for every odd prime $p$ (see \eqref{eq:pdiv}).
Therefore, unlike elements of the bases in \cite{DukeJenkins08}, $\phi$ does not come from applying the operator $(q \tfrac{d}{dq})^3$ to a modular form of weight $-2$, which makes this conjecture surprising. 

In this note, we will prove this conjecture by first realizing $\phii(z)$ as the regularized theta lift of a half-integral weight, vector-valued modular form.
In \cite{Shimura73}, Shimura initiated the study of modular forms of half-integral weight, and showed they correspond to modular forms of integral weight. This is called the Shimura lift. 
Using techniques introduced by Shintani \cite{Shintani75}, Niwa expressed the Shimura lift of a holomorphic, half-integral weight modular form as its integral against a suitable theta kernel \cite{Niwa75}. 
In \cite{Borcherds98}, Borcherds expanded the input space to include vector-valued modular forms on the metaplectic cover $\Mp_2(\Zb)$ of $\SL_2(\Zb)$ with singularities at the cusps. This will be the setting that we work in. 

To describe the vector-valued input, we need the following 3-dimensional representation $\varrho: \Mp_2(\Zb) \to \GL_3(\Cb)$ 
\begin{equation}
  \label{eq:varrho}
  \varrho(T) := 
\begin{pmatrix}
\zeta_8  & & \\ & \zeta_8^7 & \\ && \zeta_8^5 
\end{pmatrix}, ~
  \varrho(S) := \frac{\overline{\zeta_8}}{2} \begin{pmatrix}
1 & 2 & 1 \\ {1} & 0 & -{1} \\ 1 & -2 & 1
\end{pmatrix},
\end{equation}
where $\zeta_8 := e^{2\pi i /8 }$ is an $8$\tth root of unity, and $T, S$ are the generators of $\Mp_2(\Zb)$ (see \eqref{eq:ST}).
It is isomorphic to a subrepresentation of the 64 dimensional Weil representation $\rho_L$ associated to a particular lattice $L$ (see section \ref{subsec:lattice}).
Let $\{\ef_\ell: \ell = 0, 1, 2\}$ be the standard basis of $\Cb^3$. 
Note that if we scale $\ef_1$ by $\sqrt{2}$, then the representation $\varrho$ becomes unitary with respect to the standard inner product on $\Cb^3$.
If $q:= \ebf(\tau)$ with $\tau \in \Hc$, then the action of $\varrho(T)$ implies that any modular form $G$ in $M^!_{5/2, \varrho}$ (see section \ref{subsec:vvmf} for notation) has a Fourier expansion in $q^{1/8}$, and the coefficient of $q^{m/8}$ is zero if $m \not\equiv 1, 5$ or $7 \bmod{8}$. 
For any $m \in \Zb$ congruent to $1, 7$ or $5$ modulo $8$, let $\ell(m) \in \{0, 1, 2\}$ be the unique index satisfying $1 - m \equiv 2\ell(m) \bmod{8}$.
For any modular form $G \in M^!_{5/2, \varrho}$, we can write out its Fourier expansion as
\begin{equation}
  \label{eq:GFE}
G(\tau) = \sum_{m \in \Zb} c(G,  m )\mathfrak{q}^m,~ \mathfrak{q}^m :=  q^{m/8} \ef_{\ell(m)}
\end{equation}
and define the formal power series
\begin{equation}
  \label{eq:Psi}
  {\phib}(z, G) := \sum_{n \in 2\Nb - 1} (-1)^{(n-1)/2} q^n_z \sum_{r \mid n} r \cdot c(G, (n/r)^2),
\end{equation}
which converges absolutely for $y := \Im(z)$ sufficiently large, and analytically continues to a meromorphic function in $z \in \Hc$, which we also denote by ${\phib}(z, G)$.
%
Our first result is as follows.
\begin{thm}
  \label{thm:ShimLift}
In the notation above, the function ${\phib}(z, G)$ is in $M^\mathrm{mero}_{4, \chi}(\Gamma^+_0(8))$, where $\Gamma_0^+(8)\subset \SL_2(\Rb)$ is the extension of $\Gamma_0(8)$ by the matrices $R, U, W$ defined in \eqref{eq:gens} and $\chi$ is the character in \eqref{eq:chi}
In particular, there exists a unique $G_1
\in M^!_{5/2, \varrho}$ with $ -4 \phii(z) =  {\phib}(z, G_1)$ (see \eqref{eq:G1}).

\begin{rmk}
  \label{rmk:vanish}
Using SageMath \cite{sagemath}, it is easy to check that $M_4(\Gamma_0(8))$ is 5-dimensional, with a $W$-eigenbasis consisting of 4 Eisenstein series and 1 cusp form. The Eisenstein series are $E_4(z) \pm 64 E_4(8z)$ and $E_4(2z) \pm 4 E_4(4z)$ with $E_4 \in M_4$ the Eisenstein series of weight 4. 
Among the eigenbasis, only $E_4(z) - 64 E_4(8z)$ and $E_4(2z) - 4E_4(4z)$ have $W$-eigenvalue $-1$. 
From their Fourier expansions, it is clear that the only linear combination with $R$-eigenvalue $-1$ is $0$. Therefore, $M_{4, \chi}(\Gamma^+_0(8))$ is trivial.
\end{rmk}

\end{thm}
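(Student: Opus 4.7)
The plan is to realize $\phib(z, G)$ as a regularized theta lift in the spirit of Borcherds, using the framework for vector-valued inputs developed by Bruinier and Stein. One first chooses a signature-$(1,2)$ lattice $L$ (as in Section \ref{subsec:lattice}) such that $\varrho$ occurs as a subrepresentation of the Weil representation $\rho_L$. The associated Siegel theta kernel $\Theta_L(\tau, z)$ transforms with weight $5/2$ and representation $\bar\rho_L$ in $\tau$, and with weight $4$ under a finite-index subgroup of $O(L_{\Rb})$ acting as M\"obius transformations on $z \in \Hc$. One then defines
\begin{equation*}
\phib(z, G) := \int^{\mathrm{reg}}_{\Mp_2(\Zb) \backslash \Hc} \langle G(\tau), \Theta_L(\tau, z) \rangle \, v^{5/2} \, \frac{du\, dv}{v^2},
\end{equation*}
with Borcherds' regularization at the cusp.

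For the first assertion, modularity of weight $4$ in $z$ under $\Gamma_0(8)$, together with invariance up to the character $\chi$ under $R$, $U$, $W$, comes from the corresponding symmetries of $\Theta_L(\tau, z)$ under the relevant orthogonal group, restricted to the $\varrho$-isotypic part of $\rho_L$. Meromorphy in $z$ follows from Borcherds' local analysis: the regularized integral is real-analytic off a divisor cut out by negative-norm vectors of $L$, and finite-order poles arise from the principal part of $G$. The Fourier expansion \eqref{eq:Psi} is the output of the standard unfolding computation, which reduces the sum over lattice vectors to the divisor sum $\sum_{r \mid n} r \cdot c(G, (n/r)^2)$; the restriction to odd $n$ and the sign $(-1)^{(n-1)/2}$ are forced by how the index $\ell(n^2)$ defined after \eqref{eq:varrho} depends on $n \bmod 4$.

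For the second assertion, $-4\phii(z)$ has double poles on the $\Gamma_0^+(8)$-orbit of $z_\pm$, which corresponds under the orthogonal reinterpretation to a specific special divisor in $L$. One constructs an explicit $G_1 \in M^!_{5/2, \varrho}$ (for instance as a linear combination of $\eta$-quotients) whose principal part has finitely many nonzero coefficients $c(G_1, -N)$ calibrated so that the induced special divisor produces exactly those double poles with the correct leading Laurent coefficients. Existence of such a $G_1$ with prescribed principal part is standard, since the only obstructions are pairings against cusp forms of dual weight. By construction $\phib(z, G_1) + 4\phii(z)$ is then holomorphic on $\Hc$ and lies in $M_{4, \chi}(\Gamma_0^+(8))$, which is zero by Remark \ref{rmk:vanish}; this proves $-4\phii = \phib(\cdot, G_1)$. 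Uniqueness follows from a kernel analysis: if $G \in M^!_{5/2, \varrho}$ lifts to zero, Borcherds' formula forces its principal part to vanish, and the resulting holomorphic form in $M_{5/2, \varrho}$ is then zero by a direct dimension check.

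The main obstacle is identifying the orthogonal group of $L$ with the correct extension of $\Gamma_0(8)$ so that the induced character of the lift matches $\chi$ on $R$, $U$, $W$, and pinning down the precise principal part of $G_1$ from the double-pole structure of $\phii$ at $z_\pm$. After these setup steps, the proof is a more-or-less direct application of the Borcherds and Bruinier--Stein theta lift machinery.
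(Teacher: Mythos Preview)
Your proposal is correct and follows essentially the same route as the paper: embed $\varrho$ into $\rho_L$ for the signature-$(2,1)$ lattice of Section~\ref{subsec:lattice}, identify $\phib(z,G)$ with Borcherds' regularized theta lift, read off modularity under $\Gamma_0^+(8)$ with character $\chi$ from the $\mathrm{O}(\Ac)$-action on the theta kernel, use Theorems~6.2 and~14.3 of \cite{Borcherds98} for the singularity structure and Fourier expansion, match the polar part at $z_\pm$ via Lemma~\ref{lemma:orbit}, and conclude using Remark~\ref{rmk:vanish} and $M_{5/2,\varrho}=\{0\}$. One small correction: the vector-valued theta lift machinery you need is already in \cite{Borcherds98} (the paper cites \cite{BS10} only for Hecke operators in Section~\ref{sec:3}), and existence of $G_1$ is phrased in the paper via Serre duality \cite{BF04} together with the explicit construction in Section~\ref{subsec:family}, which is the concrete form of your ``obstructions are pairings against cusp forms of dual weight'' remark.
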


To prove the conjecture, it is necessary to study the integral structure of $M^!_{5/2, \varrho}$. 
In section \ref{subsec:orthogp}, we will see that $\varrho$ can be realized as a rational subrepresentation of a Weil representation $\rho_L$ attached to the lattice $L$ in section \ref{subsec:lattice}. Therefore, the $\Zb$-module $\Mb^!_{5/2, \varrho}$ of modular forms in $M^!_{5/2, \varrho}$ with integral Fourier coefficients is free and a complete lattice in $M^!_{5/2, \varrho}$ by a theorem of McGraw \cite{McGraw03} (see section \ref{subsec:vvmf}).
Through explicit construction in section \ref{subsec:family} and the theory of Hecke operators on vector-valued modular forms developed in \cite{BS10}, we will prove the following result.

\begin{thm}
  \label{thm:HeckeSc}
The free $\Zb$-module $\Mb^!_{5/2, \varrho}$ has a canonical basis
$
\{G_{d}: d \in \Nb, d \equiv 1, 3, 7 \bmod{8}\}
$ 
characterized by the property
\begin{equation}
  \label{eq:polecond}
  G_{d}(\tau) =  q^{-d/8} \ef_{\ell(-d)} + O(q^{1/8}).
\end{equation}
Suppose $G_d$ has Fourier coefficients $c(G_{d}, m) \in \Zb$ as in \eqref{eq:GFE}. Then
\begin{equation}
  \label{eq:thm2}
  c(G_d, m^2) \in m \Zb
\end{equation}
for all $m \in \Nb$ and square-free $d \in \Nb$.
\end{thm}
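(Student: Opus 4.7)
The strategy combines the integral structure of $\Mb^!_{5/2, \varrho}$ from McGraw \cite{McGraw03} with the Hecke operator calculus for vector-valued half-integral weight forms due to Bruinier--Stein \cite{BS10}.

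\textbf{Canonical basis.} Since $\varrho$ sits inside a Weil representation as a rational subrepresentation (Section \ref{subsec:orthogp}), $\Mb^!_{5/2, \varrho}$ is a full-rank $\Zb$-lattice in $M^!_{5/2, \varrho}$. Applying $\Zb$-Gaussian elimination indexed by the admissible pole exponents $-d/8$, $d \equiv 1, 3, 7 \pmod{8}$, produces integral forms $G_d$ with principal part $q^{-d/8} \ef_{\ell(-d)}$. Uniqueness, and hence canonicity, reduces to the triviality of the holomorphic space $M_{5/2, \varrho}$, which I would verify by a dimension count in the spirit of Remark \ref{rmk:vanish}.

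\textbf{Divisibility at primes.} Let $d$ be square-free and $p$ an odd prime. The Bruinier--Stein Hecke operator $T(p^2)$ acts on $M^!_{5/2, \varrho}$ via a Shimura-type formula of the schematic shape
\begin{equation*}
c(T(p^2) G, n) = c(G, p^2 n) + p \cdot \alpha_p(n) \cdot c(G, n) + p^3 \cdot \beta_p \cdot c(G, n/p^2),
\end{equation*}
for character/Kronecker-symbol values $\alpha_p(n), \beta_p$ depending on $\varrho$. Applied to $G_d$, the first summand produces no negative exponents (since $d$ is square-free, $p^2 n = -d$ has no integer solution), so the only surviving contributions to the principal part of $T(p^2) G_d$ occur at $n = -d$ and $n = -dp^2$. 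Using $p^2 \equiv 1 \pmod 8$ so that $\ell(-dp^2) = \ell(-d)$, the uniqueness of the basis forces
\begin{equation*}
T(p^2) G_d = p^3 \beta_p \, G_{dp^2} + p \, \alpha_p(-d) \, G_d.
\end{equation*}
Comparing the coefficient of $q^{1/8} \ef_{\ell(1)}$ on both sides and rearranging expresses $c(G_d, p^2)$ as a $\Zb$-linear combination of $p \, c(G_d, 1)$ and $p^3 \, c(G_{dp^2}, 1)$, both divisible by $p$.

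\textbf{Extension to general $m$ and obstacles.} For $m = \prod p_i^{e_i}$, I would iterate the above, exploiting the commutativity of $T(p_i^2)$ with $T(p_j^2)$ for $i \neq j$ and the recursion expressing $T(p^{2e})$ in terms of lower $T(p^{2j})$ from the Hecke algebra. Each step yields an identity $T(p^{2e}) G_d = \sum_j \lambda_{j,e} \, G_{d p^{2j}}$ with coefficients divisible by appropriate powers of $p$, and extracting the $(q^{1/8} \ef_{\ell(1)})$-coefficient yields $m \mid c(G_d, m^2)$ by induction on the number of prime factors. The main obstacle will be pinning down the exact Bruinier--Stein formula in the present $\varrho$-setting---including the characters $\alpha_p, \beta_p$ and their interaction with the index $\ell(\cdot)$---tightly enough to guarantee the triangular structure used in the induction, while also ensuring that the intermediate quantities $c(G_{d'}, 1)$ remain integers throughout (a consequence of the integrality of the canonical basis).
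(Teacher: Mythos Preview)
Your Hecke-operator strategy for \eqref{eq:thm2} is precisely the paper's: the identity you write down is \eqref{eq:Heckeeq} with the explicit Bruinier--Stein values $\alpha_p(n)=\left(\tfrac{n}{p}\right)$ and $\beta_p=1$, and Proposition \ref{prop:divisibility} then reads off the congruence at the $n$th coefficient (rather than only the first) and iterates. On that half you and the paper agree.

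The gap is in the construction of the basis. McGraw's theorem tells you that $\Mb^!_{5/2,\varrho}$ is a full lattice, and the vanishing of $M_{5/2,\varrho}$ gives uniqueness of a form with principal part $\mathfrak{q}^{-d}$---but together these only show that the unique such $G_d$ has \emph{rational} coefficients. Gaussian elimination over $\Zb$ on an integral basis puts it into Hermite normal form, whose $d$th leading entry is some positive integer $c_d$, not a priori equal to $1$; nothing in your sketch rules out $c_d>1$. If that happened, \eqref{eq:polecond} would fail to describe elements of $\Mb^!_{5/2,\varrho}$, and the triangular Hecke identity you rely on (which needs each $G_{p^2d}$ to lie in the integral lattice as well) would collapse.

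The paper does not try to argue integrality abstractly. It uses the factorisation $\varrho\cong\varrho_1\otimes\varrho_2$ with $\varrho_2$ one-dimensional and realised by the unary theta $\vartheta=\eta^3\ef$, so that multiplication by $\vartheta$ is a $\Zb$-module isomorphism $\iota_\vartheta:\Mb^!_{1,\varrho_1}\to\Mb^!_{5/2,\varrho}$ (Lemma \ref{lemma:wt1}). The weight-$1$ preimages $g_m$ are then built explicitly from an Eisenstein series and a Hauptmodul on $\Gamma_1(4)$, with integrality read off directly from the expansions at each cusp (Lemma \ref{lemma:wt1family}). This explicit construction is the missing ingredient in your outline; once it is supplied, your Hecke argument goes through unchanged.
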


From the definition of the map $\phib$, it is clear that Theorems \ref{thm:ShimLift} and \ref{thm:HeckeSc} together imply Conjecture \ref{conj:main}, i.e., $n|a(n)$. In fact the Hecke theory allows us to study the coefficients $a(n)$ modulo $n^3$. For instance Corollary \ref{cor:p^3 congruence} implies that 
\begin{equation}
  \label{eq:pdiv}
  p^3 \mid (a(p)-p)
\end{equation}
for every odd prime $p$.
Furthermore, this puts $\phii_1(z) = -4 {\phii(z)}$ into a family of modular forms $\{\phii_d(z) := \phib(z, G_d): d \equiv 1, 3, 7 \bmod{8} \text{ square-free}\}$ in $\Mb^\mathrm{mero}_{4, \chi}(\Gamma^+_0(8))$ that all satisfy the divisibility property \eqref{eq:divprop}.
It is worth mentioning that the phenomenon of Zagier duality \cite{Za98, ZhangYC16} is also present between this basis and the canonical basis of the free $\Zb$-module $\Mb^!_{-1/2, {\varrho^*}}$, where $\varrho^*$ is the unitary dual of $\varrho$ with respect to the standard inner product on $\Cb^3$ (see Prop.\ \ref{prop:family}).

Finally, we can apply the same idea to Example 14.4 in \cite{Borcherds98} to prove the following result.
\begin{thm}
  \label{thm:scalar}
Let $\Delta(z)$ be Ramanujan's delta function, and $E_4(z) = 1 + 240q_z + O(q_z^2)$ the Eisenstein series of weight 4. 
Then the meromorphic modular form $64\frac{\Delta(z)}{E_4(z)^2}$ satisfies \eqref{eq:divprop}.
\end{thm}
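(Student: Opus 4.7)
The plan is to run the argument used in Theorems 1.1 and 1.2 in the setting of Example 14.4 of \cite{Borcherds98}, where the relevant rank-three lattice $L'$ is (a rescaling of) the trace-zero integer $2\times 2$ matrices with the determinant quadratic form, so that the associated orthogonal modular group is $\SL_2(\Zb)$ acting on $\Hc$. The Borcherds regularized theta lift then produces a map
\begin{equation*}
\phib'\colon M^!_{5/2,\varrho'} \;\longrightarrow\; M^{\mathrm{mero}}_{4}(\SL_2(\Zb))
\end{equation*}
from the weakly holomorphic vector-valued modular forms of weight $5/2$ for the Weil representation $\varrho'$ of $\Mp_2(\Zb)$ attached to $L'$, and, exactly as in Theorem 1.1, the Fourier coefficients of $\phib'(z,G')$ have the Shimura-type shape of \eqref{eq:Psi} in terms of those of $G'$.

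First I would use Example 14.4 to pin down a preimage $G' \in M^!_{5/2,\varrho'}$ with $\phib'(z, G') = 64\Delta(z)/E_4(z)^2$. The meromorphic form $64\Delta/E_4^2$ has a single double pole on $\SL_2(\Zb)\backslash\Hc$, located at the CM point $\rho=e^{2\pi i/3}$ of discriminant $-3$, so the principal part of $G'$ is concentrated in the component of discriminant $-3$ and Borcherds' product/lift formulas write $G'$ down explicitly. Uniqueness up to holomorphic ambiguity is handled exactly as in Remark 1.2: any ambiguity lies in a low-dimensional space $M_{5/2,\varrho'}$ and is killed by matching a single Fourier coefficient.

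Second, I would apply McGraw's theorem and the Bruinier--Stein Hecke theory to $\varrho'$ in place of $\varrho$. This produces a canonical $\Zb$-basis $\{G'_d\}$ of the integral lattice $\Mb^!_{5/2,\varrho'}$ characterized by pole conditions analogous to \eqref{eq:polecond}, together with the divisibility $c(G'_d, m^2) \in m\Zb$ for every $m\in\Nb$ and every square-free $d$; the argument is a verbatim transcription of the proof of Theorem 1.2, because it uses only formal properties of Weil representations and the integrality of the Bruinier--Stein Hecke operators, not the particular representation $\varrho$.

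Since the only relevant discriminant is $d=3$, which is square-free, the preimage $G'$ is (up to a normalization constant that must come out integral) a $\Zb$-multiple of $G'_3$, and the Shimura-type formula of the first paragraph then yields $n\mid a(n)$ immediately. The main obstacle is the first step: translating Example 14.4 of \cite{Borcherds98} into the Fourier-expansion conventions used here, identifying the precise $G'$ with its principal part, and checking that it lies in $\Mb^!_{5/2,\varrho'}$ and is in fact an integer multiple of $G'_3$. Once this is done, the rest is a direct adaptation of the arguments of Theorems 1.1 and 1.2.
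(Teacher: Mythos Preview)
Your proposal is correct and is precisely the approach the paper indicates: the paper does not give a separate detailed proof of this theorem but simply states that ``we can apply the same idea to Example 14.4 in \cite{Borcherds98},'' i.e., replace $(L,\varrho)$ by the level-$1$ lattice and its Weil representation and rerun the arguments of Theorems~\ref{thm:ShimLift} and~\ref{thm:HeckeSc}. Your outline in fact spells out more of the details (the pole at $\rho=e^{2\pi i/3}$, the square-free discriminant $d=3$, the role of the factor $64$ for integrality) than the paper itself, and the paper confirms your reading of the normalization issue by remarking that removing the $64$ is ongoing work.
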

In fact numerically, also $\Delta(z)/E_4(z)^2$ satisfies \eqref{eq:divprop}. It is ongoing work of the authors to remove the factor $64$ from the above theorem.

\begin{question}
Can a non-constant holomorphic modular form have the divisibility property \eqref{eq:divprop}? It seems likely that even the weaker condition, that every sufficiently large prime divides the corresponding Fourier coefficient, cannot be satisfied by a holomorphic modular form.
\end{question}

This note is organized as follows. In section \ref{sec:prelim}, we give the preliminaries concerning vector-valued modular forms as input to Borcherds' lift and prove Theorem \ref{thm:ShimLift} as a special case of Borcherds' result. In section \ref{sec:3}, we construct the family $\{G_d\}$, prove Conjecture \ref{conj:main}, and give some numerical data of the Fourier expansions of the bases.

\textbf{Acknowledgement.} 
We thank Paul Jenkins for helpful conversations.

\section{Preliminaries.}
\label{sec:prelim}

In this section we will introduce the vector-valued theta lift \`{a} la Borcherds \cite{Borcherds98}.

\subsection{Vector-Valued Modular Forms.}
\label{subsec:vvmf}
Denote $\Hc^* := \Hc \cap \mathbb{P}^1(\Qb)$ the extended upper half plane, which is acted on by $\SL_2(\Rb)$ via linear fractional transformation. 
Let $\Mp_2(\Rb)$ be the metaplectic cover of $\SL_2(\Rb)$ consisting of pairs $(\gamma,\varphi)$, where $\gamma = \smat{a}{b}{c}{d}\in\SL_2(\Rb)$ and $\varphi$ is a holomorphic function on $\Hc$ satisfying
  $
\varphi(\tau)^2 = (c\tau+d).
$
We denote the preimage of $\SL_2(\Zb)$ in $\Mp_2(\Rb)$ under the covering map by $\Mp_2(\Zb)$, which is generated by
\begin{equation}
  \label{eq:ST}
  T:=\left(\pmat 1101,1\right),~
  S:=\left(\pmat 0{-1}10,\sqrt{\tau}\right).
\end{equation}
Here we take the principal branch of the square root.

Let $\Gamma \subset \Mp_2(\Rb)$ be a subgroup commensurable with $\Mp_2(\Zb)$.
A meromorphic function $f : \Hc^* \to \Cb$ is called a meromorphic modular form of weight $k \in \tfrac{1}{2} \Zb$ with respect to a unitary representation $\rho: \Gamma \to \GL(W)$ on a finite dimensional $\Cb$-vector space $W$ if it satisfies
\begin{equation}
  \label{eq:modular}
  (f \mid_k (\gamma, \varphi))(\tau) := \varphi(\tau)^{-2k} f (\gamma \cdot \tau) = \rho((\gamma, \varphi)) f
\end{equation}
for all $(\gamma, \varphi) \in \Gamma$. 
We use $M^{\mathrm{mero}}_{k, \rho}(\Gamma)$ to denote the vector space of these meromorphic modular forms. It contains the subspaces $M^!_{k, \rho}(\Gamma), M_{k, \rho}(\Gamma), S_{k, \rho}(\Gamma)$ of weakly holomorphic, holomorphic, and cuspidal modular forms. 

When $k \in \Zb$, it is necessary for $\rho$ to factor through the image of $\Gamma$ in $\SL_2(\Zb)$, and we replace $\Gamma$ with its image in $\SL_2(\Zb)$ in the above notation.
Also, we drop the subscript $\rho$, resp.\ $\Gamma$, if it is 1-dimensional and trivial, resp.\ $\Mp_2(\Zb)$.
For $N \in \Nb$, we denote by $\Gamma^*_0(N)$ the extension of the congruence subgroup $\Gamma_0(N)$ by Atkin-Lehner operators \cite{AtkinLehner} 

A common type of representation comes from arithmetic. 
Let $L$ be an even, integral lattice with quadratic form $Q$. Let $(b^+,b^-)$ be the signature of $L_\Rb := L \otimes_\Zb \Rb$ and $L^\vee := \mathrm{Hom}_{\Zb}(L, \Zb)$ be the dual lattice. 
Through the bilinear form $(\cdot, \cdot)$ induced by $Q$ on $L$, we identify $L^\vee$ with a sublattice of $L_\Rb$ containing $L$.
The quotient $\Ac_L:= L^\vee/L$ is then a finite abelian group, on which $Q$ becomes a quadratic form valued in $\Qb/\Zb$.
On the vector space $\Cb[\Ac_L]$ with basis $\{\ef_h: h \in \Ac_L\}$, there is a hermitian inner product $\langle\cdot, \cdot \rangle$ defined by
\begin{equation}
  \label{eq:lrangle}
  \langle v, w \rangle := \sum_{h \in \Ac_L} v_h \overline{w_h}
\end{equation}
for $v = \sum_{h \in \Ac} v_h \ef_h$ and $w = \sum_{h \in \Ac} w_h \ef_h$ in $\Cb[\Ac_L]$, which induces the norm
\begin{equation}
  \label{eq:norm}
 \| v \|:= \sqrt{\langle v, v \rangle }
\end{equation}
on $\Cb[\Ac]$.
The group $\Mp_2(\Zb)$ acts through the Weil representation $\rho_L$ defined by
\begin{equation}
  \label{eq:weil}
\rho_L(T)\ef_h=\ebf(Q(h))\ef_h,~\rho_L(S)\ef_h=
\frac{\ebf((b^+-b^-)/{8})}{|\Ac_L|}\sum_{\ell\in \Ac_L} \ebf(-(h,\ell))\ef_\ell,
\end{equation}
which is unitary with respect to the hermitian inner product in \eqref{eq:lrangle}.
By a theorem of McGraw \cite{McGraw03}, the space $M^!_{k, \rho_L}$ has a basis with Fourier coefficients in $\Qb$. We will use $\Mb^!_{k, \rho_L}$ to denote the $\Zb$-module of modular forms in $M^!_{k, \rho_L}$ with integral Fourier coefficients. 

 One can define the orthogonal group of $\Ac_L$ as the following finite group
 \begin{equation}
   \label{eq:OAL}
   \mathrm{O}(\Ac_L) := \{ \sigma: \Ac_L \to \Ac_L \text{ group automorphism } \mid Q(\sigma(h)) = Q(h) \text{ for all } h \in \Ac_L\}.
 \end{equation}
 Every element in $\mathrm{O}(\Ac_L)$ induces a $\rho$-linear automorphism on $\Cb[\Ac_L]$, hence also acts on $M^{\mathrm{mero}}_{k, \rho_L}$, which decomposes according to the irreducible representations of $\mathrm{O}(\Ac_L)$. 
For each $h \in \Ac_L$, we have the normal subgroup $\mathrm{O}(\Ac_L)_h \subset \mathrm{O}(\Ac_L)$ consisting of the stabilizers of $h$ in $\mathrm{O}(\Ac_L)$.
 
\subsection{Symmetric Space.}
Let $V_\Rb := M_2(\Rb)^0$ be the real vector space of 2 by 2 matrices with trace 0. 
It becomes a real quadratic space of signature $(2, 1)$ with respect to the quadratic form $Q := -N \cdot \det$ for any natural number $N$. 
The group $\GL_2(\Rb)$ acts isometrically on $V_\Rb$ via conjugation, which is explicitly given by
\begin{equation}
\label{eq:SOaction}
\begin{split}
  \gamma \cdot \pmat{B}{C}{-A}{-B} &= \gamma \pmat{B}{C}{-A}{-B} \gamma^{-1} \\
&= \pmat{ad B - bdA - acC + bcB}{-2abB + b^2 A + a^2C}{2cdB - d^2 A - c^2C}{-bcB - adB + bdA + acC}
\end{split}
\end{equation}
for $\gamma = \smat{a}{b}{c}{d} \in \GL_2(\Rb)$.
This identifies $\SL_2(\Rb)$ with $\Spin(V_\Rb)$ and $\PSL_2(\Rb)$ with $\SO^+(V_\Rb)$, the connected component of the special orthogonal group $\SO(V_\Rb)$ containing the identity.
For any $\gamma \in \SL_2(\Rb)$, we also use $\gamma$ to represent its image in  $\SO^+(V_\Rb)$.

Let $\Dc$ be the symmetric space of oriented negative lines in $V_\Rb$ and $\Dc^0 \subset \Dc$ the connected component containing $\Rb \cdot \pmat{0}{1}{-1}{0}$. 
As usual, we can use $\Hc$ to parametrize $\Dc^0$ by defining
\begin{equation}
  \label{eq:parametrization}
  Z(z) :=  \pmat{-z}{z^2}{-1}{z}
\end{equation}
for each $z \in \Hc$.
Then $\{\Re(Z(z)), \Im(Z(z))\}$ always span a positive definite 2-plane in $V_\Rb$ and its orthogonal complement is an element of $\Dc^0$. Furthermore,
\begin{equation}
  \label{eq:invariance}
  \gamma \cdot Z(z) = (cz + d)^{-2} Z(\gamma z)
\end{equation}
for all $\gamma = \smat{a}{b}{c}{d} \in \GL^+_2(\Rb)$.

\subsection{Lattice.}
\label{subsec:lattice}
Let $L \subset M_2(\Qb)$ be the following lattice
\begin{equation}
  \label{eq:L}
  L := 
\left\{
\pmat{B}{C/2}{-4A}{-B}: A, B, C \in \Zb
\right\},
\end{equation}
which is even integral with respect to the quadratic form $Q = -2\cdot {\det}{}$ with dual lattice
\begin{equation}
  \label{eq:Lvee}
L^\vee := 
\left\{
\pmat{b/4}{c/8}{-a}{-b/4}: a, b, c \in \Zb
\right\}.
\end{equation}
The real quadratic space $L_\Rb$ has signature $(2, 1)$.
It is not hard to see that $L$ is the direct sum of the following two sublattices $L_1$ and $L_2$ of signatures $(1,1)$ and $(1,0)$:
\begin{equation}
  \label{eq:L12}
  L_1 := 
\left\{
  \pmat{0}{C/2}{-4A}{0} \in L
\right\}, \quad
L_2 := \left\{
\pmat{B}{0}{0}{-B} \in L
\right\}.
\end{equation}
Furthermore, the dual lattice $L_j^\vee \subset L_{j, \Rb} \subset L_\Rb$ is contained in $L^\vee$ for $j = 1, 2$.

The finite quadratic module $\Ac := L^\vee/L$ is isometric to $(\Zb/4\Zb)^3$ via the map
\begin{equation}
  \label{eq:Z4isom}
  \begin{split}
  \Ac &\stackrel{\cong}{\to} (\Zb/4\Zb)^3 \\
\pmat{b/4}{c/8}{-a}{-b/4} & \mapsto (a, b, c),
\end{split}
\end{equation}
where the quadratic form on $(\Zb/4\Zb)^3$ is $Q((a, b, c)) := \frac{b^2 - 2ac}{8} \in \Qb/\Zb$.
The isotropic elements are
\begin{equation}
\label{eq:isoA}
\Iso(\Ac) = \left\{
\begin{aligned}
&( 0 , 0 , 0 ) ,
( 0 , 0 , 1 ) ,
( 0 , 0 , 2 ) ,
( 0 , 0 , 3 ) ,
( 1 , 0 , 0 ) ,
( 1 , 2 , 2 ) ,\\
&( 2 , 0 , 0 ) ,
( 2 , 0 , 2 ) ,
( 2 , 2 , 1 ) ,
( 2 , 2 , 3 ) ,
( 3 , 0 , 0 ) ,
( 3 , 2 , 2 )
\end{aligned}
\right\}.
\end{equation}
For $j = 1, 2$, let $\Ac_j$ be the finite quadratic module $L_j^\vee/L_j$ and $\rho, \rho_j$ be the Weil representations associated to $L, L_j$ respectively. Then $\Ac_j \subset \Ac$ and $\Ac = \Ac_1 \oplus \Ac_2$, which induces
\begin{equation}
  \label{eq:rhoequal}
 \Cb[\Ac] = \Cb[\Ac_1] \otimes \Cb[\Ac_2],~\rho = \rho_1 \otimes \rho_2.  
\end{equation}
We also identify $\Ac_1$, resp.\ $\Ac_2$, with $(\Zb/4\Zb)^2$, resp.\ $\Zb/4\Zb$, by sending $(a, 0, c)$, resp.\ $(0, b, 0)$, to $(a, c) \in (\Zb/4\Zb)^2$, resp.\ $b \in \Zb/4\Zb$.

Let $\SO(L) \subset \SO(V)$ be the special orthogonal group stabilizing the lattice. It also fixes the dual lattice, hence induces an action on the finite quadratic module $\Ac$. 
Denote $\Gamma_L \subset \SO(L)$ the kernel of this action. 
In fact, $\Gamma_L$ is contained in $\SO^+(L) := \SO(L) \cap \SO^+(L_\Rb)$
After scaling by $\sqrt{2}$, the lattice $L$ is the same as the lattice $L(8, 4)$ in Section 4 of \cite{Zemel17a}, where $\SO^+(L)$ and $\Gamma_L$ were given explicitly in Theorem 4.2 loc.\ cit.\ as 
\begin{equation}
  \label{eq:SO}
  \SO^+(L) := \pmat{1/2}{0}{0}{1} \Gamma^*_0(2) \pmat{2}{0}{0}{1}, ~ \Gamma_L := \Gamma_0(8).
\end{equation}
Note that $\Gamma^*_0(2) \subset \SL_2(\Rb)$ is obtained from $\Gamma_0(2)$ by adjoining the Fricke-involution $W_2 := \smat{0}{1/\sqrt{2}}{-\sqrt{2}}{0}$. 
Consider the following elements in $\SL_2(\Rb)$
\begin{equation}
  \label{eq:gens}
R:= \pmat{1}{1/2}{0}{1}, ~ U:= \pmat{1}{0}{4}{1}, ~ W := \pmat{0}{1/(2\sqrt{2})}{-2\sqrt{2}}{0}.
\end{equation}
Then their images in $\PSL_2(\Rb)$, along with $\Gamma_L$, generate $\SO^+(L)$, which is called the group of Atkin-Lehner operators.
This is a reasonable name since if $L$ is the lattice studied in \cite{BO10}, then $\Gamma_L$, resp.\ $\SO^+(L)$, is isomorphic to the image of the congruence subgroup $\Gamma_0(N) \subset \SL_2(\Zb)$, resp.\ $\Gamma_0^*(N)$, in $\PSL_2(\Rb)$.
We denote the preimage of $\SO^+(L)$ in $\SL_2(\Rb)$ by $\Gamma_0^+(8)$, which contains $\Gamma_0(8)$.

\subsection{Orthogonal Groups.}
\label{subsec:orthogp}
It is important to understand the finite group $\mathrm{O}(A)$, since $\Cb[\Ac]$ decomposes according to its irreducible representations.
By considering $\Ac$ as a free $\Zb/4\Zb$-module of rank 3 through the map \eqref{eq:Z4isom}, we see that the group $\mathrm{O}(\Ac)$ defined in \eqref{eq:OAL} has the following form
\begin{equation}
  \label{eq:SOAc}
  \mathrm{O}(\Ac) = \left\{g \in \GL_3(\Zb/4\Zb): Q(g\cdot h) = Q(h) \text{ for all } h \in (\Zb/4\Zb)^3 \cong \Ac\right\}.
\end{equation}
The same holds for the free $\Zb/4\Zb$-modules $\Ac_1, \Ac_2$, and we can then define $\SO(\Ac)$, resp.\ $\SO(\Ac_j)$, as the subgroup of $\mathrm{O}(\Ac)$, resp.\ $\mathrm{O}(\Ac_j)$, consisting of the elements with determinant $1 \bmod 4$.
Let $\nu$ be the negative identity matrix, which is contained in all three orthogonal groups above.
It is even in $\SO(\Ac_1)$ since $\Ac_1$ has even rank.

There is a natural map $\SO(L)/\Gamma_L \hookrightarrow \SO(\Ac)$. 
By checking every element in the finite group $\SO(\Ac)$, we know that this map is an isomorphism.
We abuse the notation slightly by using $R, U, W$ to represent the images of the elements in \eqref{eq:gens} under this map.
It is now easy to check that $R, U, W$ all have order 2 and
$
U = WRW.
$
Therefore, the group $\SO^+(L)/\Gamma_L$ is the wreath product of $\Zb/2\Zb$ by $\Zb/2\Zb$, which is isomorphic to the dihedral group $D_8$ of order 8, and $\SO(\Ac) \cong D_8 \times \Zb/2\Zb$.
Using \eqref{eq:SOaction}, their actions on $\Ac$ are given by 
\begin{equation}
  \label{eq:AL}
  \begin{split}
R(a, b, c) &= (a, b + 2a, 2a + 2b + c),  \;
W(a, b, c) = (c, -b, a).
  \end{split}
\end{equation}
To make the picture complete, we also consider the following central element in $\mathrm{O}(\Ac)$
\begin{equation}
  \label{eq:mu}
  \begin{split}
  \mu: \Ac & \to \Ac \\
(a, b, c) &\mapsto (a, -b, c)    ,
  \end{split}
\end{equation}
which generates $\mathrm{O}(\Ac) \cong D_8 \times (\Zb/2\Zb)^2$ along with $R, U, W$ and $\nu$.

By acting on the basis $\{\ef_h: h \in \Ac\}$, the group $\mathrm{O}(\Ac)$ naturally acts on $\Cb[\Ac]$, which commutes with the action of $\Mp_2(\Zb)$ under $\rho$. Therefore, $\Mp_2(\Zb)$ acts on the $\chi$-isotypic subspace $\Cb[\Ac]^\chi \subset \Cb[\Ac]$ as well, where $\chi$ is a character of order 2 on $\mathrm{O}(\Ac)$ defined by
\begin{equation}
  \label{eq:chi}
  \chi(R) = \chi(U) = \chi(W) = \chi(\mu) = -\chi(\nu) = -1.
\end{equation}
After composing with the map $\SO^+(L) \to \SO^+(L)/\Gamma_L \hookrightarrow \mathrm{O}(\Ac)$, we can also view $\chi$ as a character of $\SO^+(L) = \Gamma^+_0(8)$.
Consider the projection map $\varpi: \Cb[\Ac] \to \Cb[\Ac]^\chi$ defined by
  \begin{equation}
  \label{eq:varpi}
\varpi(  v) :=  \frac{1}{|\mathrm{O}(\Ac)|}\sum_{s \in \mathrm{O}(\Ac)} \chi(s)^{-1} s(v),~ v \in \Cb[\Ac],
\end{equation}
which restricts to the identity map on $\Cb[\Ac]^\chi \subset \Cb[\Ac]$.
For any $h \in \Ac$, the vector $\varpi(\ef_h)$ does not vanish if and only if the stabilizer $\mathrm{O}(\Ac)_h$ is contained in the kernel of $\chi$. 
Let $A \subset \Ac$ denote the subset of such elements, 
which is $\mathrm{O}(\Ac)$-invariant and
decomposes into the following $\mathrm{O}(\Ac)$-orbits
\begin{equation}
  \label{eq:nonfixorbit}
  \begin{split}
      A &= A_0 \sqcup A_1 \sqcup A_2, \\
A_0 &:= \{(0, 1, 1), (3, 3, 0), (0, 3, 3), (1, 1, 0), (0, 3, 1), (1, 3, 0), (0, 1, 3), (3, 1, 0)\}, \\
A_1 &:= \{(1, 1, 1), (1, 3, 1),(3, 3, 3), (3, 1, 3)\}, \\
A_2 &:= \{(1, 1, 2), (2, 1, 1), (3, 3, 2),  (2, 3, 3), (3, 1, 2), (2, 3, 1),(1, 3, 2), (2, 1, 3)\}.
  \end{split}
\end{equation}
Therefore, the vector space $\Cb[\Ac]^\chi$ is three dimensional and we identify it with $\Cb^3$ using the basis of orthogonal vectors $\{\ef_\ell: \ell = 0, 1, 2\}$ given by
\begin{equation}
  \label{eq:efell}
  \ef_\ell := {|\mathrm{O}(\Ac)|} \cdot {\varpi(\ef_{(1, 1, \ell)})}.
\end{equation}
After a straightforward calculation, we see that $\rho$ becomes the representation $\varrho$ in \eqref{eq:varrho} when restricted to $\Cb[\Ac]^\chi$.

\begin{lemma}
  \label{lemma:subrep}
  In the notations above, the map $\varpi: \Cb[\Ac] \to \Cb[\Ac]^\chi \cong \Cb^3$ 
  intertwines the representations $\rho$ and $\varrho$ from \eqref{eq:varrho}.
\end{lemma}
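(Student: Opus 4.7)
The plan is to verify the intertwining directly by computing $\rho(T)$ and $\rho(S)$ on the basis $\{\ef_\ell\}_{\ell=0,1,2}$ of $\Cb[\Ac]^\chi$ defined in \eqref{eq:efell} and comparing with \eqref{eq:varrho}. First I would observe that $\varpi$ is automatically $\Mp_2(\Zb)$-equivariant: because $\mathrm{O}(\Ac)$ preserves both $Q$ and the pairing $(\cdot,\cdot)$, its permutation action on $\{\ef_h\}$ commutes with $\rho$, and a short change of variables in the defining sum of $\varpi$ gives $\varpi(s\cdot v) = \chi(s)\,\varpi(v)$ for $s\in\mathrm{O}(\Ac)$. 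Hence $\varpi$ intertwines $\rho$ with itself, $\Cb[\Ac]^\chi$ is a $\Mp_2(\Zb)$-subrepresentation of $\Cb[\Ac]$, and the lemma reduces to identifying this subrepresentation with $\varrho$ via \eqref{eq:efell}.

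The $T$-computation is immediate. Since $\rho(T)$ is diagonal with eigenvalue $\ebf(Q(h))$ on $\ef_h$ and $Q$ is constant on $\mathrm{O}(\Ac)$-orbits, each $\ef_\ell$ is a $\rho(T)$-eigenvector with eigenvalue $\ebf(Q((1,1,\ell))) = \ebf((1-2\ell)/8) = \zeta_8^{1-2\ell}$. For $\ell=0,1,2$ this yields $\zeta_8,\zeta_8^7,\zeta_8^5$, matching the diagonal of $\varrho(T)$.

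The $S$-computation is the main work. Starting from \eqref{eq:weil} together with the signature $(b^+,b^-)=(2,1)$ one has
\[
\rho(S)\ef_{(1,1,\ell)} \;=\; \frac{\zeta_8}{|\Ac|}\sum_{h\in\Ac}\ebf\!\bigl(-((1,1,\ell),h)\bigr)\,\ef_h.
\]
Applying $\varpi$, using the identity $\varpi(\ef_{s(h)})=\chi(s)\varpi(\ef_h)$, and the vanishing of $\varpi(\ef_h)$ off the set $A$, the right-hand side collapses onto the three orbits $A_0,A_1,A_2$ of \eqref{eq:nonfixorbit}. A direct bookkeeping shows that the coefficient of $\ef_m$ in $\rho(S)\ef_\ell$ equals the finite character sum
\[
\frac{\zeta_8}{|\Ac|}\sum_{h\in A_m}\chi(s_h)\,\ebf\!\bigl(-((1,1,\ell),h)\bigr),
\]
where $s_h\in\mathrm{O}(\Ac)$ is any lift of $(1,1,m)\mapsto h$ (well defined because $\chi$ is trivial on the stabilizer). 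Using the pairing $((1,1,\ell),(a,b,c))=(b-c-\ell a)/4$ derived from $Q$, the explicit action \eqref{eq:AL} of $R,W$ together with the analogous formulas for $U,\mu,\nu$, and the values \eqref{eq:chi}, the nine character sums can be evaluated and produce exactly the matrix $\varrho(S)$.

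The main obstacle is the sign bookkeeping in the $S$-step: for each of the twenty elements of $A$ one must single out an explicit $s_h\in\mathrm{O}(\Ac)$ sending $(1,1,m)$ to $h$ and record $\chi(s_h)\in\{\pm1\}$. Since $|\mathrm{O}(\Ac)|=32$ and its action on $\Ac$ is given in closed form, the calculation is finite and mechanical, but some care is needed to produce the correct off-diagonal entries (in particular the $\pm 2$ and the sign flip in the middle row) of $\varrho(S)$.
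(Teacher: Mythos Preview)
Your proposal is correct and takes essentially the same approach as the paper, which simply asserts that ``after a straightforward calculation, $\rho$ becomes the representation $\varrho$ when restricted to $\Cb[\Ac]^\chi$'' and omits the details. Your outline supplies exactly those details: the equivariance of $\varpi$ (which the paper records earlier as the fact that $\mathrm{O}(\Ac)$ acts $\rho$-linearly), the immediate $T$-eigenvalue check via $Q((1,1,\ell))=(1-2\ell)/8$, and the reduction of the $S$-action to finite character sums over the orbits $A_0,A_1,A_2$.
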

\begin{rmk}
  \label{rmk:subspace}
  For any $k \in \frac{1}{2} \Zb$, we can view $M^!_{k, \varrho}$ as a subspace of $M^!_{k, \rho}$.
\end{rmk}
Similarly, we can analyze $\mathrm{O}(\Ac_j)$. For $j = 2$, this is generated by $\nu$. For $j = 1$, one needs the additional generator 
\begin{equation}
  \label{eq:sigma}
  \begin{split}
    \sigma: \Ac_1 &\to \Ac_1 \\
(a, c) &\mapsto (c, a).
  \end{split}
\end{equation}
The product $\mathrm{O}(\Ac_1) \times \mathrm{O}(\Ac_2)$ canonically embeds into $\mathrm{O}(\Ac)$ via \eqref{eq:rhoequal}, and we use $\mathrm{O}(\Ac_1, \Ac_2) \subset \mathrm{O}(\Ac)$ to denote this image.
When we restrict $\chi$ to $\mathrm{O}(\Ac_1, \Ac_2)$, it decomposes as $\chi_1 \otimes \chi_2$, where
\begin{equation}
  \label{eq:chij}
  \chi_1(\sigma) = - \chi_1(\nu) = - \chi_2(\nu) = 1.
\end{equation}
Let $\varrho_j$ denote the restriction of $\rho_j$ to the subspace $\Cb[\Ac_j]^{\chi_j} \subset \Cb[\Ac_j]$ and $\varpi_j: \Cb[\Ac_j] \to \Cb[\Ac_j]^{\chi_j}$ the projection defined as in \eqref{eq:varpi} for $j = 1, 2$.
The same analysis as before shows that $\Cb[\Ac_j]^{\chi_j}$ has dimension 3 and 1 for $j = 1, 2$ respectively.
Therefore, $\varrho_2$ is a character of $\Mp_2(\Zb)$ given by
\begin{equation}
  \label{eq:varrho2}
  \varrho_2(T) = \zeta_8,~ \varrho_2(S) = \overline{\zeta_8}.
\end{equation}
Now, the identification in \eqref{eq:rhoequal} means that
$
\Cb[\Ac_1]^{\chi_1} \otimes \Cb[\Ac_2]^{\chi_2} \subset \Cb[\Ac]^\chi.
$
By considering the dimensions, we see that this is in fact an equality, and intertwines the representations $\varrho_1 \otimes \varrho_2$ and $\varrho$.  
We can now identify $\Cb[\Ac_1]^{\chi_1}$ and $\Cb[\Ac_2]^{\chi_2}$ with $\Cb^3$ and $\Cb$ respectively via the bases
\begin{equation}
  \label{eq:ef'}
  \begin{split}
      \bfrak_\ell &:= \sum_{s \in \mathrm{O}(\Ac_1)} \chi(s) s(\ef_{(1, 0, \ell)}) \in \Cb[\Ac_1]^{\chi_1}, \ell = 0, 1, 2 \\
 \ef &:= \ef_{(0, 1, 0)} - \ef_{(0, 3, 0)} \in \Cb[\Ac_2]^{\chi_2}.
  \end{split}
\end{equation}
Then $\bfrak_\ell \otimes \ef = \ef_\ell$ for $\ell = 0, 1, 2$ and the equality $\Cb[\Ac_1]^{\chi_1} \otimes \Cb[\Ac_2]^{\chi_2} = \Cb[\Ac]^\chi$ becomes $\Cb^3 \otimes \Cb = \Cb^3$.
With respect to the basis $\{\bfrak_\ell: \ell = 0, 1, 2\}$, the representation $\varrho_1$ has the matrix representation
\begin{equation}
  \label{eq:varrho1}
    {\varrho_1}(T) := \begin{pmatrix}
1 & & \\ & -i & \\ && -1
\end{pmatrix}, \quad
  {\varrho_1}(S) := \frac{i}{2} \begin{pmatrix}
1 & 2 & 1 \\ 1 & 0 & -1 \\ 1 & -2 & 1
\end{pmatrix}.
\end{equation}
As in Remark \ref{rmk:subspace}, we have $M_{k, \varrho_j} = M_{k, \rho_j}^{\chi_j} \subset M_{k, \rho_j}$ for $j = 1, 2$.
We use $\ef_\ell^*, \bfrak_\ell^*$ and $\ef^*$ to represent basis vectors of the unitary dual representations $\varrho^*, \varrho_1^*$ and $\varrho_2^*$.
\subsection{Heegner Divisor.}
For $m \in \Qb$ and $h \in \Ac$, we define the $\Gamma_L$-invariant subset 
$$
L_{m, h} := \{\lambda \in L+h: Q(\lambda) = m\}\subseteq L^\vee.
$$
When $m < 0$, the following analytic subset of $\Hc$
\begin{equation}
  \label{eq:Zmh}
  Z_{m, h} := \{ z \in \Hc: (Z(z) , \lambda) = 0 \text{ for some } \lambda \in L_{m, h}\}
\end{equation}
is $\Gamma_L$-invariant and descends to an algebraic divisor on $\Gamma_L \backslash \Hc$. 
Note that $Z_{m, h}$ is empty when $m > 0$. 
The singularities of Borcherds lifts are supported on these divisors, which are called \textit{Heegner divisors}. 
For a function to be a regularized theta lift, it is then necessary for it to have singularity along Heegner divisors of a certain lattice. 
In our case, the singularity of $\phii$ appears at $z = z_\pm$. It is straightforward to check that 
$$
(Z(z_\pm), \lambda_\pm) = 0\text{ for } \lambda_\pm := \pmat{\pm 1/4}{1/8}{-1}{\mp 1/4} \in L_{-1/8, (1, \pm 1, 1)}.
$$
We now have the following lemma.
\begin{lemma}
  \label{lemma:orbit}
The set $\Gamma_L \backslash L_{-1/8, (1, \pm 1, 1)}$ has size one.
\end{lemma}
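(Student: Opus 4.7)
The plan is to parametrize elements of $L_{-1/8, (1, \epsilon, 1)}$ (for $\epsilon = \pm 1$) by integer triples $(a, b, c) \in \Zb^3$ via $\lambda = \smat{b/4}{c/8}{-a}{-b/4}$, under which $Q(\lambda) = -1/8$ becomes $b^2 - 2ac = -1$ and the coset condition becomes $(a, b, c) \equiv (1, \epsilon, 1) \pmod 4$; the triple $(1, \epsilon, 1)$ itself corresponds to $\lambda_\pm$. The action of $\Gamma_L = \Gamma_0(8)$ by conjugation is explicit from \eqref{eq:SOaction}, and the goal is to show every such triple is $\Gamma_L$-equivalent to $(1, \epsilon, 1)$.

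First I establish $a, c > 0$ for every such triple. Since $2ac = b^2 + 1 > 0$, $a$ and $c$ share a sign, and since $b$ is odd, every odd prime $p$ dividing $b^2 + 1$ satisfies $p \equiv 1 \pmod 4$, so any positive factorization of $(b^2+1)/2$ yields both factors $\equiv 1 \pmod 4$; a negative $a$ with $a \equiv 1 \pmod 4$ would force $|a| \equiv 3 \pmod 4$, a contradiction. Hence $a, c > 0$. I then associate to each $\lambda$ the CM point $z_\lambda = (-b + i)/(4a) \in \Hc$ cut out by $(Z(z_\lambda), \lambda) = 0$; the $\Gamma_L$-action on triples intertwines the Möbius action on $z_\lambda$ via \eqref{eq:invariance}, and $\lambda \mapsto z_\lambda$ is injective on $L_{-1/8, (1, \epsilon, 1)}$ since $-(1, \epsilon, 1) \ne (1, \epsilon, 1)$ in $\Ac$. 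Associated to $(a, b, c)$ is the primitive positive-definite binary form $[8a, 4b, c]$ of discriminant $-16$ with $8 \mid 8a$, and standard Heegner point theory gives $h(-16) \cdot \#\{\beta \bmod 16 : \beta^2 \equiv -16 \pmod{32}\} = 1 \cdot 2 = 2$ total $\Gamma_0(8)$-orbits of such forms, represented by $[8, \pm 4, 1] = (1, \pm 1, 1)$, with the mod-$4$ value of $b$ (which is $\Gamma_L$-invariant) distinguishing them. So each coset $(1, \epsilon, 1)$ contributes exactly one orbit.

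The main delicate step is the Heegner orbit count at level $8$ and discriminant $-16$. A completely self-contained alternative is explicit Euclidean reduction: the generators $T$ and $T' = \smat{1}{0}{8}{1}$ act on triples as $(a, b, c) \mapsto (a, b - 4a, c + 8a - 4b)$ and $(a, b, c) \mapsto (a - 4b + 8c, b - 4c, c)$ respectively, strictly decreasing $a + c$ whenever $|b| > 2 \min(a, c)$; additional elements of $\Gamma_0(8)$ obtained from a matrix Euclidean algorithm then dispatch the finitely many residual reduced triples such as $(13, 21, 17)$ that survive $T, T'$-reduction, completing the argument.
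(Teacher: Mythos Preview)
Your main argument matches the paper's: both pass from $\lambda$ to the primitive binary form $[8a,4b,c]$ of discriminant $-16$ and then invoke the Gross--Kohnen--Zagier classification of $\Gamma_0(8)$-orbits of such forms. Your positivity step (all odd prime divisors of $b^2+1$ are ${}\equiv 1\bmod 4$, forcing $a,c>0$ once $a\equiv c\equiv 1\bmod 4$) is a clean addition the paper does not make; the paper instead allows both signs, observes that $\mathcal{Q}^0_{8,-16,\pm4}$ is the disjoint union of the images of the two cosets $(1,\pm1,1)$ and $(-1,\pm1,-1)$, and halves the resulting count of $2$ via the evident bijection $(a,b,c)\leftrightarrow(-a,b,-c)$ between them. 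Both routes feed the same GKZ input, so this is the same proof with a cosmetic difference in bookkeeping.

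Your proposed ``self-contained alternative'' via $T,T'$-reduction, however, does not work as written. The reduced region $|b|\le 2\min(a,c)$ is \emph{unbounded}: from $b^2=2ac-1\le 4\min(a,c)^2$ one only gets $\max(a,c)\le 2\min(a,c)$, and indeed the Pell equation $b^2-2p^2=-1$ has infinitely many solutions $(b,p)=(1,1),(7,5),(41,29),(239,169),\dots$, each yielding a reduced triple $(p,b,p)$ with $a+c=2p$ arbitrarily large (your own example $(13,21,17)$ is one of infinitely many, not finitely many). So ``finitely many residual reduced triples'' is false, and the vague appeal to a matrix Euclidean algorithm does not supply the missing reduction. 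A genuinely self-contained argument along these lines would require a full fundamental-domain reduction for $\Gamma_0(8)$ acting on its Heegner points of discriminant $-16$, which in the end amounts to the GKZ count you already invoked.
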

\begin{proof}
We associate to $\lambda = \smat {b/4}{c/8}{-a}{-b/4}\in L_{-1/8,(1,\pm 1,1)}$ the binary quadratic form $[\lambda](x,y) = [8a,4b,c](x,y)=8ax^2+4bxy+cy^2$. This identifies $L_{-1/8,(1,\pm 1,1)}$ with the set
\[
\mathcal{Q}^{0}_{8,-16,\pm 4} = 
\{[8a,4b,c]|\, b\equiv \pm 4\bmod {16},~\gcd(8a,b,c)=1\}
\]
If $[8a,b,c]\in \mathcal{Q}^{0}_{8,-16,\pm 4}$, we must have $a\equiv c\equiv \pm 1\bmod 8$. So $\mathcal{Q}^{0}_{8,-16,\pm 4}$ is the union of the images of $L_{-1/8,(1,\pm 1,1)}$ and $L_{-1/8,(-1,\pm 1,-1)}$ under the map $\lambda\mapsto [\lambda]$. If $\gamma\in\Gamma_L$ we have $f_{\gamma\lambda}(x,y) = f((x,y)(\gamma^{-1})^T)$. Hence $\Gamma_L\setminus (L_{-1/8,(1,\pm 1,1)}\cup  L_{-1/8,(-1,\pm 1,-1)})$ is in bijection with $\Gamma_L\setminus \mathcal{Q}^0_{8,-16,\pm 4}$. We also note that while, $\Gamma_L\setminus L_{-1/8,(1,\pm 1,1)}$ and $\Gamma_L\setminus L_{-1/8,(-1,\pm 1,-1)}$ are disjoint, they are in bijection to each other via the map that sends $\smat {b/4}{c/8}{-a}{-b/4}$ to $\smat {b/4}{-c/8}{a}{-b/4}$. In  \cite[\S I.1]{GrossKohnenZagier} the classes of 
$\mathcal{Q}^0_{8,-16,\pm 4}$ modulo $\Gamma_L = \Gamma_0(8)$ are classified. In our particular case these $\Gamma_0(8)$-classes correspond to $\SL_2(\Zb)$-classes of primitive binary quadratic forms of discriminant $-16$, of which there are $2$. Hence $|\Gamma_L\setminus L_{-1/8,(1,\pm 1,1)}|=|\Gamma_L\setminus \mathcal{Q}^0_{8,-16,\pm 4}|/2= 1$.
\end{proof}

\subsection{Additive Borcherds' Lift.}
In \cite{Borcherds98}, Borcherds extended the input space of theta lift from $\SL_2$ to $\mathrm{O}(p, q)$ to include weakly holomorphic, vector-valued modular forms. The outputs are then automorphic forms on orthogonal Shimura varieties with singularities along Heegner divisors. For $(p, q) = (2, 1)$, the orthogonal Shimura variety is again the modular curve, and one can obtain a generalization of the Shimura lift to include weakly holomorphic modular forms (see \cite{ChoieLim16} and \cite{LiZemel17}).
For the lattice $L$ in section \ref{subsec:lattice}, Borcherds' result implies Theorem \ref{thm:ShimLift}.

\begin{proof}[Proof of Theorem \ref{thm:ShimLift}]
Let $G  \in M^!_{5/2, \varrho} = M^{!, \chi}_{5/2, \rho} \subset M^!_{5/2, \rho}$ with Fourier expansion
$$
G(\tau) = \sum_{h \in \Ac} \ef_h \sum_{m \in \Zb}  c_h(m) q^{m/8}
$$
and $\rho = \rho_L$ with $L$ as in section \ref{subsec:lattice}. We choose the isotropic vector $\smat{0}{0}{-4}{0} \in L$ with $z' := \smat{0}{1/8}{0}{0} \in L^\vee$ in the notation of Theorem 14.3 of \cite{Borcherds98}. The lattice $K$ is then our $L_2$, $m^+ = 2$ and $c_{h}(0) = 0$ whenever $h$ is not in the set $A$ defined in \eqref{eq:nonfixorbit}. Therefore, the Fourier expansion of the regularized theta lift in part 5 of Theorem 14.3 loc.\ cit.\ becomes
$$
\sum_{n > 0} \sum_{b \in \Zb, b \text{ odd}, b < 0}q_z^{-(nb)/8} n \sum_{a \in \Zb/4\Zb} \ebf(-na) c_{(a, b, 0)}\left( {b^2}{} \right).
$$
Since $G$ is in the $\chi$-isotypic component and 
$$
\mu((1, 1, 0)) = (1, 3, 0), \nu((1, 1, 0)) = (3, 3, 0), (\mu \circ \nu)((1, 1, 0)) = (3, 1, 0),$$
 we know that $c_{(1, 1, 0)}(m) = c_{(3, 3, 0)}(m) = -c_{(1, 3, 0)}(m) = - c_{(3, 1, 0)}(m)$ for all $m \in \Zb$. 
By Lemma \ref{lemma:subrep}, we can write $G(\tau) =  \sum_{m \in \Zb} c(G, m) q^{m/8} \ef_{\ell(m)}$ as in \eqref{eq:GFE} in the introduction with $\ef_\ell$ defined in \eqref{eq:efell}.
Then 
$
c(G, m) ={c_{(1, 1, 0)}(m) }
$
for all $m \in 8\Zb + 1$ 
Substitute this into the expression above gives us $2i \phib(z, G)$.

The action of the orthogonal group on the theta kernel and \eqref{eq:invariance} imply that $\phib(z, G) \mid_4 \gamma = \phib(z, \gamma \cdot G)$ for any $\gamma \in \PSL_2(\Rb)$, hence $\phib(z, G) \in M^\mathrm{mero}_{4, \chi}(\SO^+(L))$ with $\SO^+(L) = \Gamma^+_0(8)$.
By Theorem 6.2 of \cite{Borcherds98}, the function
$$
\phib(z, G) - \frac{i}{16\pi^2} \sum_{\lambda \in L^\vee, \, (\lambda, Z(z_0)) = 0}\frac{ c_\lambda(Q(\lambda)) }{(\lambda, Z(z))^2}
$$
is holomorphic when $z \in \Hc$ is near $z_0\in \Hc$.
When $G = -G_1/4 = -\frac{q^{-1/8}}{4} \ef_1 + O(q^{1/8})$, the polar part of the expansion of $\phib(z, G)$ near $z = z_\pm$ matches exactly with that of $\phii(z)$. Therefore, the difference $\phib(z, G_1) - \phii(z) \in M_{4, \chi}(\Gamma^+_0(8))$ is zero by Remark \ref{rmk:vanish}.
The existence follows from Serre duality \cite{BF04}, as there is no non-trivial cusp form in $S_{-1/2, \overline{\varrho}}$. In fact, we will explicitly construct it in the following section. 
 It is unique since $M_{5/2, \varrho} = \{0\}$ by Prop.\ \ref{prop:family} below.
\end{proof}


\section{Bases, Dualities and Hecke Operators.}
\label{sec:3}
In this section, we will construct the $\Zb$-basis $\{G_d: d \equiv 1, 3 \text{ or } 7 \bmod{8}\}$ of $\Mb^{!}_{5/2, \varrho}$, and show that it has duality with respect to a $\Zb$-basis of $\Mb^!_{-1/2, {\varrho^*}}$, where $\varrho^*$ is the unitary dual of $\varrho$, i.e., $\varrho^*(T)$ and $\varrho^*(S)$ are the conjugate transpose of $\varrho(T)$ and $\varrho(S)$ respectively.
 Using Hecke operators on vector-valued modular forms \cite{BS10}, we will prove Conjecture \ref{conj:main}.

\subsection{Two Bases of Modular Forms.}
\label{subsec:family}
In the notations of the previous section, we will prove the following result.
\begin{prop}
  \label{prop:family}
  For every $d \in \Nb$ congruent to 1, 3 or 7 modulo 8, there exists a unique $G_d$ in $\Mb^{!}_{5/2, \varrho}$ with the Fourier expansion
  \begin{equation}
    \label{eq:Gdqexp}
    G_d(\tau) = \mathfrak{q}^{-d}    + O(1) = \sum_{D \ge -d,~ D \equiv 1, 5, 7 \bmod{8}} B(D, d) \mathfrak{q}^{D},~\mathfrak{q}^D := q^{D/8} \ef_{\ell(D)}. 
  \end{equation}
  For every $D \in \Nb$ congruent to 1, 5 or 7 modulo 8, there exists a unique $F_D$ in $\Mb^{!}_{-1/2, {\varrho^*}}$ with the Fourier expansion
  \begin{equation}
    \label{eq:FDqexp}
    F_D(\tau) = \tilde{\mathfrak{q}}^{-D}    + O(1) = \sum_{d \ge -D,~ d \equiv 1, 3, 7 \bmod{8}} A(D, d) \tilde{\mathfrak{q}}^{d},~ \tilde{\mathfrak{q}}^d := q^{d/8} \ef^*_{\ell(-d)}. 
  \end{equation}
  Furthermore, $A(D, d) + B(D, d) = 0$ for all $D, d \in \Nb$.
\end{prop}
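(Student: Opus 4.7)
The plan is to establish existence and uniqueness over $\Cb$, then integrality, then deduce the duality from a residue computation. For existence I would appeal to the Serre duality of Bruinier--Funke~\cite{BF04}: a weakly holomorphic form of weight $k$ and representation $\rho$ with prescribed principal part exists iff its principal part pairs to zero against every cusp form of weight $2-k$ and representation $\rho^*$. For $G_d$, this obstruction is $S_{-1/2,\varrho^*}$, which vanishes by negative weight. For $F_D$, the obstruction is $S_{5/2,\varrho}$; I would establish $S_{5/2,\varrho} = 0$ (and hence $M_{5/2,\varrho} = 0$) via a dimension formula for vector-valued modular forms, using that $\varrho(T)$ has no $1$-eigenvalue, so any holomorphic form has vanishing constant term and $M_{5/2,\varrho} = S_{5/2,\varrho}$. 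Uniqueness of both $G_d$ and $F_D$ then follows from the vanishing of $M_{5/2,\varrho}$ and $M_{-1/2,\varrho^*}$, the latter by negative weight.

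For integrality, I would invoke McGraw's theorem, which guarantees that $\Mb^!_{5/2,\rho_L}$ is a free $\Zb$-module forming a complete lattice in $M^!_{5/2,\rho_L}$. Together with Lemma~\ref{lemma:subrep} and the projection $\varpi$ of \eqref{eq:varpi}, and using the normalization built into the basis \eqref{eq:efell}, one verifies (perhaps assisted by an explicit construction of the smallest $G_d$ via eta products or by Hecke transforms from \cite{BS10}) that $\Mb^!_{5/2,\varrho}$ is itself a full-rank $\Zb$-lattice in $M^!_{5/2,\varrho}$. By the uniqueness above, the principal-part map from this lattice into $\bigoplus_d \Zb \cdot \mathfrak{q}^{-d}$ is injective with full-rank image, and the Hermite normal form of its matrix produces the canonical integer basis $\{G_d : d \equiv 1,3,7 \bmod 8\}$ satisfying \eqref{eq:Gdqexp}. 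The analogous construction applied to $\varrho^*$ gives $\{F_D\}$.

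For the duality, consider the scalar pairing $\langle G_d, F_D\rangle : \Hc \to \Cb$. Since $\varrho$ and $\varrho^*$ are dual representations of $\Mp_2(\Zb)$, this function transforms with integer weight $5/2 + (-1/2) = 2$ and trivial multiplier, hence descends to a weakly holomorphic weight-$2$ scalar modular form on $\SL_2(\Zb)$ that is holomorphic on $\Hc$. Applying the residue theorem to the differential $\langle G_d, F_D\rangle(\tau)\, d\tau$ on $X(1) = \mathbb{P}^1$ forces its constant Fourier coefficient at $\infty$ to vanish. Expanding via \eqref{eq:Gdqexp} and \eqref{eq:FDqexp} with $\langle \ef_\ell, \ef^*_{\ell'}\rangle = \delta_{\ell,\ell'}$, only pairs $(m,n)$ with $m + n = 0$ contribute to the constant term; the surviving contributions come from pairing the principal part $\mathfrak{q}^{-d}$ of $G_d$ with the $A(D,d)$-term of $F_D$, and the principal part $\tilde{\mathfrak{q}}^{-D}$ of $F_D$ with the $B(D,d)$-term of $G_d$, yielding $A(D,d) + B(D,d) = 0$. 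The main obstacle I anticipate is the vanishing $S_{5/2,\varrho} = 0$, which requires a careful dimension computation in the 3-dimensional subrepresentation $\varrho$ of the 64-dimensional Weil representation $\rho_L$.
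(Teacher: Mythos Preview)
Your duality argument---pairing $F_D$ against $G_d$ to land in $M^!_2(\SL_2(\Zb))$ and using that the constant term vanishes---is exactly the paper's.

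For existence and uniqueness over $\Cb$ you take a different route. The paper divides by $\eta^3$ to pass from weight $5/2$ and representation $\varrho$ to weight $1$ and representation $\varrho_1$ (Lemma~\ref{lemma:wt1}), proves $M_{1,\varrho_1}=0$ by a direct inspection inside $M_1(\Gamma_1(16))$, and then constructs each weight-$1$ form explicitly from scalar forms on $\Gamma_1(4)$ (Lemma~\ref{lemma:wt1family}). Your abstract Serre-duality approach is valid and more conceptual; the only substantive input, as you anticipate, is the vanishing of $S_{5/2,\varrho}$, for which the paper's $\eta^3$-reduction is probably still the cheapest route.

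The genuine gap is in integrality. McGraw plus the Hermite normal form of the principal-part map gives a $\Zb$-basis of $\Mb^!_{5/2,\varrho}$ that is upper-triangular with respect to pole order, but nothing in your outline forces the diagonal entries to be $1$: a priori the integral form with smallest leading term at order $-d/8$ could have principal part $n_d\,\mathfrak{q}^{-d}$ with $n_d>1$, in which case the form with principal part exactly $\mathfrak{q}^{-d}$ would not lie in $\Mb^!_{5/2,\varrho}$. Showing $n_d=1$ for every $d$ is precisely the content of the proposition, and your parenthetical ``perhaps assisted by an explicit construction'' is in fact the heart of the matter. The paper carries this out concretely: after reducing to weight $1$, it builds each $g_m$ as the lift $\Bsc(\tilde g_m)$ of a scalar form $\tilde g_m = \tilde g_0 \cdot P_m(\tpsi) \in M^!_1(\Gamma_1(4))$, where $\tilde g_0$ is an Eisenstein series, $\tpsi$ is a Hauptmodul, and $P_m \in \Zb[x]$ is monic; integrality of all Fourier coefficients is then read off from the explicit expansions of $\tilde g_0$ and $\tpsi$ at all three cusps of $\Gamma_1(4)$.
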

To prove this proposition, we will first reduce it to the case of weight one using the presence of the unary theta function
\begin{equation}
  \label{eq:theta}
\vartheta(\tau) := \sum_{b \in \Zb/4\Zb} \ef_{(0, b, 0)} \sum_{n \in 4\Zb + b} n q^{n^2} = \eta^3(\tau) \ef
\in S_{3/2, \varrho_2} \cap \Mb^!_{3/2, \varrho_2},
\end{equation}
which only vanishes at the cusp infinity. Since tensoring with $\ef$ gives an isomorphism between $\Cb[\Ac_1]^{\chi_1}$ and $\Cb[\Ac]^{\chi}$ as $\Mp_2(\Zb)$-modules under the representations $\varrho_1$ and $\varrho$, we obtain the following lemma.

\begin{lemma}
  \label{lemma:wt1}
  In the notations above, the following map is an isomorphism of $\Zb$-modules
  \begin{equation}
    \label{eq:varthetaisom}
    \begin{split}
      \iota_\vartheta: \Mb^{!}_{1, \varrho_1} & \to \Mb^{!}_{5/2, \varrho} \\
      f &\mapsto f   \otimes \vartheta .
    \end{split}
  \end{equation}
  Furthermore, the preimage of $\Mb_{5/2, \varrho}$ under this isomorphism is the trivial subspace $\Mb^{}_{1, \varrho_1}$ and hence $\Mb_{5/2, \varrho}$ is also trivial.
\end{lemma}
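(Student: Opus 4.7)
The plan is to exploit two structural features of $\vartheta = \eta(\tau)^3 \otimes \ef$: it is nowhere-vanishing on $\Hc$ (since $\eta$ is), and its leading Fourier coefficient is a unit, so that $\eta^3/q^{1/8} \in 1 + q\Zb\llbracket q\rrbracket$ is invertible over $\Zb$. Combined with the $\Mp_2(\Zb)$-module identification $\Cb[\Ac_1]^{\chi_1}\otimes \Cb[\Ac_2]^{\chi_2} = \Cb[\Ac]^\chi$, in which the second factor is one-dimensional and spanned by $\ef$, these facts turn multiplication by $\vartheta$ into a bijection preserving the integral lattice. The bulk of the argument is therefore a controlled division by $\eta^3$.

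First I would verify that $\iota_\vartheta$ is well-defined: for $f\in\Mb^!_{1,\varrho_1}$, the tensor $f\otimes\vartheta$ has weight $1+3/2=5/2$, transforms under $\varrho_1\otimes\varrho_2=\varrho$, and has integral Fourier coefficients by integrality of $\vartheta$. Injectivity is immediate since $\vartheta\not=0$. For surjectivity, given $G\in\Mb^!_{5/2,\varrho}$, the one-dimensionality of $\Cb[\Ac_2]^{\chi_2}$ lets us uniquely write $G = F\otimes\ef$ with $F\colon\Hc\to\Cb[\Ac_1]^{\chi_1}$; then $f := F/\eta^3$ is holomorphic on $\Hc$ by non-vanishing of $\eta$, transforms with weight $1$ under $\varrho_1$, and has integral Fourier coefficients because $\eta^3/q^{1/8}$ is a unit in $\Zb\llbracket q\rrbracket$. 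Hence $f\in\Mb^!_{1,\varrho_1}$ with $\iota_\vartheta(f)=G$.

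For the ``furthermore'' statement, $f\otimes\vartheta$ is bounded at the cusp iff $\mathrm{ord}_\infty(f)\geq -1/8$, because $\vartheta$ vanishes to order $1/8$ at $\infty$. The $T$-eigenvalues $1, -i, -1$ of $\varrho_1$ from \eqref{eq:varrho1} force the Fourier exponents in the $\bfrak_0,\bfrak_1,\bfrak_2$-components to lie in $\Zb$, $\tfrac{3}{4}+\Zb$, and $\tfrac{1}{2}+\Zb$ respectively; thus any non-trivial pole at $\infty$ has order $\geq 1/4 > 1/8$, and $f$ must already be holomorphic. So the preimage of $\Mb_{5/2,\varrho}$ under $\iota_\vartheta$ equals $\Mb_{1,\varrho_1}$.

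The main obstacle I anticipate is showing $\Mb_{1,\varrho_1}=\{0\}$. My approach is to apply the valence formula for $\Mp_2(\Zb)$-modular forms of weight $k$ and representation of dimension $d$: the total order of vanishing of a nonzero holomorphic form equals $kd/12 = 1\cdot 3/12 = 1/4$. But a non-zero Fourier coefficient in the $\bfrak_1$- or $\bfrak_2$-component already contributes $\geq 3/4$ or $\geq 1/2$ at $\infty$ alone, exceeding $1/4$; the only remaining possibility is that $f$ is supported entirely in the $\bfrak_0$-component, which is incompatible with the non-diagonal $\varrho_1(S)$ in \eqref{eq:varrho1} since $S$ mixes all three basis vectors. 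Consequently $\Mb_{1,\varrho_1}=\{0\}$, and by the isomorphism $\iota_\vartheta$ so is $\Mb_{5/2,\varrho}$.
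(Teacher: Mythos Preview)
Your arguments for the isomorphism and for identifying the preimage of $\Mb_{5/2,\varrho}$ with $\Mb_{1,\varrho_1}$ are correct and agree with the paper's reasoning; the paper is simply terser, noting that the inverse is division by $\eta^3$ and that $\Mb_{5/2,\varrho}\subset S_{5/2,\varrho}$ so the quotient stays holomorphic.

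The gap is in your proof that $\Mb_{1,\varrho_1}=\{0\}$. There is no valence formula asserting that a \emph{single} nonzero vector-valued modular form of weight $k$ in a $d$-dimensional representation has total order of vanishing equal to $kd/12$. The quantity $kd/12$ governs the Wronskian of a full basis (equivalently, the degree of the rank-$d$ bundle), not an individual section; for $d>1$ a single section has no well-defined divisor of that degree. Concretely, for the trivial $2$-dimensional representation the form $(E_4,0)$ has ``total vanishing'' either $0$ or $+\infty$ depending on how you count the identically zero component, neither of which is $4\cdot 2/12$. Your bookkeeping inherits this problem: once you allow $f_1\equiv 0$ its ``contribution'' is undefined, so the step concluding that $f$ must be supported on $\bfrak_0$ does not go through.

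The paper instead reduces to a scalar computation. Since $\varrho_1$ has level $4$, the component $f_1(4\tau)$ lies in the $7$-dimensional space $M_1(\Gamma_1(16))$ with Fourier expansion supported on exponents $\equiv 3\pmod 4$; a SageMath check shows no nonzero such form exists, so $f_1=0$. Then the $S$-relation $f_1\mid_1 S=\tfrac{i}{2}(f_0-f_2)$ from \eqref{eq:varrho1} forces $f_0=f_2$, and the incompatible $T$-eigenvalues $1$ and $-1$ force $f_0=f_2=0$. If you want to avoid the machine check, you would need to replace your valence step by an honest scalar argument on $\Gamma(4)$ or $\Gamma_1(16)$, and weight $1$ is exactly the case where the standard dimension formulas fail, so some explicit input is hard to avoid.
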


\begin{rmk}
  \label{rmk:-1/2}
By taking the conjugate transpose of the representation $\varrho_1$ and dividing by $\eta^3(\tau)$, we also obtain an explicit isomorphism $\Mb^!_{1, {\varrho^*_1}}$ and $\Mb^!_{-1/2, {\varrho^*}}$. However, we will see later that $\Mb_{1, {\varrho^*_1}}$ is one dimensional.
\end{rmk}
\begin{proof}
The first part follows from the argument above. 
The inverse map is simply dividing by $\eta^3(\tau)$ on each component.
For the second part, notice that $\Mb_{5/2, \varrho} \subset S_{5/2, \varrho}$ and the order of vanishing at each component of any $G \in \Mb_{5/2, \varrho}$ is at least $q^{1/8}$. Therefore, the result is still holomorphic after dividing $G$ by $\eta^3(\tau)$.

For any $f = \sum_{\ell = 0,1, 2} f_\ell\bfrak_\ell \in M_{1, \varrho_1}$, the function $f_1(4\tau)$ is in the 7-dimensional space $M_{1}(\Gamma_1(16))$ with the Fourier expansion
$$
f_1(4\tau) = \sum_{n \ge 0} c(n) q^{4n + 3}
$$
at infinity. A quick calculation with SageMath \cite{sagemath} shows that $f_1$ is identically zero.
Then 
$$
f \mid_1 S = (f_0 \mid S) \ef_0 + (f_2 \mid S) \ef_2 = \varrho_1(S) \cdot f = \frac{i}{2} ((f_0 + f_2)(\ef_0 + \ef_2) + 2(f_0 - f_2) \ef_1)
$$
implies that $f_0 = f_2$. Since $f_\ell(\tau + 1) = (-i)^\ell f_\ell(\tau)$, we conclude that $f_0$ and $f_2$ are both identically zero. Therefore, $M^{}_{1, \varrho_1}$ is trivial and so is $\Mb_{1, \varrho_1}\subset M^{}_{1, \varrho_1}$. The same procedure shows that $S_{1, {\varrho^*_1}}$ is trivial and $M_{1, {\varrho^*_1}}$ is at most one dimensional. We will construct this non-trivial element later. 
\end{proof}
With the lemma above, it suffices to study the space $M^{!}_{1, \varrho_1} = M^{!, \chi_1}_{1, \rho_1}$, which has the following property.
\begin{lemma}
  \label{lemma:wt1family}
For every $m$ in $\Nb \backslash (4\Nb -1)$, there exists a unique $g_m \in \Mb^{!}_{1, \varrho_1}$ with the Fourier expansion
$$
g_m(\tau) =  q^{-m/4} \bfrak_{m \bmod 4} + O(1).
$$ 
For every $m$ in $\Nb \cup \{0\} \backslash (4\Nb -3)$, there exists a unique $f_m \in \Mb^{!}_{1, {\varrho^*_1}}$ with the Fourier expansion
$$
f_m(\tau) = q^{-m/4} \bfrak^*_{-m \bmod 4} + O(q^{1/4}).
$$ 
\end{lemma}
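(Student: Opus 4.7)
The plan is to handle uniqueness by direct appeal to Lemma \ref{lemma:wt1}, and to establish existence by an inductive construction that multiplies by a scalar Hauptmodul to deepen the pole order.

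For uniqueness of $g_m$, the difference of two candidates with the same principal part lies in $M_{1,\varrho_1}$, which is trivial by Lemma \ref{lemma:wt1}. For uniqueness of $f_m$, the difference lies in $M_{1,\varrho^*_1}$ and vanishes to order at least $q^{1/4}$ in every component. Since $M_{1,\varrho^*_1}$ is at most one-dimensional by the remark following Lemma \ref{lemma:wt1}, and any non-zero element must have non-vanishing constant term in the $\bfrak^*_0$-component (as forced by the diagonal $T$-action $\varrho^*_1(T)=\mathrm{diag}(1,i,-1)$), the difference vanishes. In particular, $f_0$ is pinned down as the distinguished non-trivial element of $M_{1,\varrho^*_1}$.

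For existence over $\Qb$, Serre duality reduces the problem to the triviality of the cuspidal spaces $S_{1,\varrho^*_1}$ and $S_{1,\varrho_1}$, both of which were established in the proof of Lemma \ref{lemma:wt1}. To upgrade rational existence to integrality, I would first exhibit explicit integral seeds---one $g_m$ in each of the three admissible residue classes of $m$ modulo $4$, and similarly for the $f_m$---by means of eta quotients, theta series of binary lattices, or bundled scalar weight-one forms on $\Gamma_1(16)$ obtained as in the proof of Lemma \ref{lemma:wt1}. Multiplication by the Klein invariant $j(\tau)=q^{-1}+744+\cdots$ preserves $\Mb^!_{1,\varrho_1}$ and $\Mb^!_{1,\varrho^*_1}$ (because $j$ is $\Mp_2(\Zb)$-invariant) and, in the $q^{1/4}$-grading, lowers the pole order by $4$. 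Setting $g_{m+4}:=j\cdot g_m-\sum_{m'\le m}c_{m'}g_{m'}$ with integers $c_{m'}$ chosen to cancel all intermediate negative Fourier coefficients then yields each $g_m$ with integer coefficients; the analogous recipe produces the $f_m$.

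The principal difficulty I expect is the construction of the integral seeds, and in particular of the non-trivial generator $f_0$ of $M_{1,\varrho^*_1}$: this requires exhibiting an explicit holomorphic weight-one object (for example as an eta quotient or an Eisenstein-like projection) whose bundled components transform correctly under $\varrho^*_1$ and have integer Fourier expansions. Once these base cases are secured, the inductive step propagates integrality mechanically, and the residue-class bookkeeping guarantees that every admissible $m$ is reached.
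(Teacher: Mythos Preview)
Your uniqueness arguments are fine and essentially match the paper's (the paper phrases the $f_m$ case directly via $S_{1,\varrho_1^*}=\{0\}$, which is what your condition $O(q^{1/4})$ amounts to). Your rational existence via Serre duality is also correct. The genuine divergence, and the genuine gap, is in the integrality step.

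Your scheme is to produce one integral seed in each of the three admissible residue classes modulo $4$ and then propagate by multiplication with $j$. This is viable in principle, but note that $j\cdot g_m$ will in general acquire new principal parts in \emph{all three} components $\bfrak_0,\bfrak_1,\bfrak_2$ (since the holomorphic tail of $g_m$ lives in every component), so your recursion is a strong induction across all residue classes simultaneously; in particular you cannot manufacture $g_4$ from $g_1,g_2$ and $j$ alone, and the $\bfrak_0$-seed has to be supplied separately. You explicitly flag the seed construction as ``the principal difficulty'' and then do not carry it out. That is precisely where the content of the lemma lies, so as written the proof is incomplete.

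The paper sidesteps the seed problem entirely by working on the scalar side. Using Borcherds' induction map $\Bsc: M^!_1(\Gamma_1(4))\to M^!_{1,\varrho_1}$ (Lemma~2.6 of \cite{Borcherds98}, adapted to the hyperbolic-plane lattice $L_1$), one only needs a single scalar seed, namely the weight-one Eisenstein series $\tilde g_0=iE_1^{\varphi,\mathbf 1}\in M_1(\Gamma_1(4))$, together with the Hauptmodul $\tpsi$ of $X_1(4)$. Since $\tpsi$ has a simple pole at the cusp $0$ and is integral at the other cusps, multiplying $\tilde g_0$ by monic integral polynomials in $\tpsi$ raises the pole order at $0$ one step at a time, producing $\tilde g_m$ for every $m$; the explicit cusp expansions then show that $g_m:=\Bsc(\tilde g_m)$ has integral Fourier coefficients and the required principal part. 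This buys you all residue classes at once without having to hunt for vector-valued seeds, and it gives completely explicit formulas (e.g.\ $\tilde g_1=\tilde g_0(\tpsi-12)$), which the paper later needs to identify $G_1$.
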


\begin{proof}
The uniqueness easily follows from the previous lemma, where we saw that $M^{}_{1, \varrho_1}$ and $S^{}_{1, {\varrho^*_1}}$ are trivial. For the existence, we will give an explicit construction, which can be implemented numerically.

Since $\rho_1$ is the representation attached to a scaled hyperbolic plane, we will use Lemma 2.6 in \cite{Borcherds98} to construct $\tilde{g}_m$ from scalar-valued modular forms in $M^!_1(\Gamma_1(4))$.
For $\tilde{g} \in M^!_1(\Gamma_1(4))$ and $(a, c) \in \Ac_1$, define 
\begin{equation}
  \label{eq:sc2vv}
\Bsc(\tilde{g})_{(a,c)} := \sum_{d \in \Zb/4\Zb, (c, d, 4) = 1} i^{ad} \tilde{g} \mid_1 \pmat{*}{*}{c}{d} + 
\sum_{d \in \Zb/4\Zb, (a, d, 4) = 1} i^{cd} \tilde{g} \mid_1 \pmat{*}{*}{a}{d}.
\end{equation}
Then by the same proof of Lemma 2.6 in \cite{Borcherds98}, we know that 
\begin{equation}
  \label{eq:Bsc}
 \Bsc(\tilde{g}) = \sum_{h \in \Ac_1} \Bsc(\tilde{g})_h \ef_h = \Bsc(\tilde{g})_{(1,0)} \ef_0 + \frac{\Bsc(\tilde{g})_{(1,1)}}{2} \ef_1 + \Bsc(\tilde{g})_{(1,0)} \ef_0 
\end{equation}
is in $M^{!}_{1, \varrho_1} = M^{!, \chi_1}_{1, \rho_1}$.
For $(a, c) = (1,0), (1, 1)$ and $(1, 2)$, we can explicitly write $\Bsc(\tilde{g})_{(a, c)}$ as
\begin{align*}
  \Bsc(\tilde{g})_{(1, 0)} &= 2i \cdot \tilde{g} + \sum_{d = 0}^3 \tilde{g} \mid_1 S T^d, \quad
  \Bsc(\tilde{g})_{(1, 1)} = 2 \sum_{d = 0}^3 i^d (\tilde{g} \mid_1 ST^d), \\
  \Bsc(\tilde{g})_{(1, 2)} &= 2i (\tilde{g} \mid_1 ST^{2}S^{-1}) + \sum_{d = 0}^3 (-1)^d (\tilde{g} \mid_1 S T^d).
\end{align*}
So the Fourier expansion of $\Bsc(\tilde{g})$ is directly related to those of $\tilde{g}$ at the three cusps of $\Gamma_1(4)$.

To construct the family $g_m$, we start with the Eisenstein series
\[
\tilde{g}_0(\tau)=iE_1^{\varphi,\textbf{1}}=iL(0,\varphi) + 2i\sum_{n\geq 1}\sum_{m|n}\varphi(m)q^n=\frac{i}{2} (1 + 4q + O(q^2)) \in \frac{i}{2} \Zb\llbracket q \rrbracket
\]
that generates ${M}_1(\Gamma_1(4))$. Here $\varphi$ is the Kronecker symbol modulo $4$. The Fourier expansions of $E_1^{\varphi,\textbf{1}}$ at the cusps of $\Gamma_1(4)$ are given in \cite[Chapter 2]{Weisinger1977}, from which one calculates
\begin{align*}
\tilde{g}_0|_1 S &= -\frac{i}{2} \tilde{g}_0\left(\frac{\tau}{4}\right) = \frac{1}{4}(1+4q^{1/4}+O(q^{2/4}))\in  \frac{1}{4} \Zb\llbracket q^{1/4} \rrbracket,\\
\tilde{g}_0|_1 ST^2 S^{-1} &=
2i\sum_{n\geq 1~ \mathrm{odd}}\sum_{m|n}\varphi(m) q^{n/2}
 = 2iq^{1/2}(1+2q^2+O(q^4)) \in 2i q^{1/2} \Zb\llbracket q \rrbracket.
\end{align*}
Since $M_{1,\varrho_1}=\{0\}$, $\tilde{g}_0$ must lift to $0$ under $\Bsc$ and using the Fourier expansions we can check that this is indeed the case.
Let $\psi$ be the Hauptmodul from \eqref{eq:psi}. Then 
\begin{equation}
  \label{eq:tphi}
  \tpsi(\tau) := \psi(-1/(4\tau)) = 2^4 \eta(\tau)^{-16} \eta(2\tau)^{24} \eta(4\tau)^{-8} = 2^4(1 + O(q)) \in 2^4\Zb\llbracket q \rrbracket
\end{equation}
is also a Hauptmodul of $X_1(4)$. 
At the other cusps, it has the expansions
\begin{align*}
  (\tpsi \mid S)(\tau) &= \eta(\tau)^{-16} \eta(\tau/2)^{24} \eta(\tau/4)^{-8} = q^{-1/4} + 8 + O(q^{1/4}) \in q^{-1/4} \Zb\llbracket q^{1/4} \rrbracket, \\
(\tpsi \mid ST^2S^{-1})(\tau) &= -2^8 \eta(\tau)^{-8} \eta(4\tau)^{8} = -2^8q ( 1 + O(q)) \in 2^8 q \Zb\llbracket q \rrbracket.
\end{align*}
Multiplying $\tilde{g}_0$ with monic polynomials in $\tpsi$ with integral coefficients, we can recursively construct $\tilde{g}_m \in M^!_1(\Gamma_1(4))$ such that $    2i \cdot \tilde{g}_m(\tau), iq^{-1/2} (\tilde{g}_m \mid_1 ST^2S^{-1})(\tau)
 \in \Zb\llbracket q \rrbracket$ 
and 
\begin{equation}
  \label{eq:cuspexp}
(\tilde{g}_m \mid S)(\tau) = \frac{1}{4} ( q^{-m/4} + O(q^{1/4})) \in \frac{1}{4} \Zb\llbracket q^{1/4} \rrbracket
\end{equation}
for any $m \in \Nb$.
For example
\begin{equation}
  \label{eq:tg1}
 \tilde{g}_1 = \tilde{g}_0 (\tpsi - 12).
\end{equation}
Let $a_m(n) \in \Zb$ be the $(n/4)$\tth Fourier coefficient of $4\tilde{g}_m \mid S$.
Then the $\ef_0, \ef_1$ and $\ef_2$ components of $g_m := \Bsc(\tilde{g}_m)$ are given by
\begin{align*}
 & 2i \cdot \tilde{g}_m + \sum_{n \in 4 \Zb} a_m(n) q^{n/4},~
\sum_{n \in 4\Zb -1} a_m(n) q^{n/4}, ~
 2i (\tilde{g}_m \mid_1 ST^{2}S^{-1}) + \sum_{n \in 4\Zb + 2} a_m(n) q^{n/4}
\end{align*}
respectively. Note that $g_m$ is identically zero if $m \equiv 3 \bmod{4}$. 
Therefore the $g_m$'s satisfy the condition in the lemma.
The family $\{f_m: m \in \Nb \cup \{0\} \backslash (4\Nb -3)\} \subset \Mb^!_{1, {\varrho^*_1}}$ can be constructed similarly.
\end{proof}

\begin{proof}[Proof of Prop.\ \ref{prop:family}]
  For $d \equiv 1, 3, 7 \bmod{8}$, there is a unique $G_d \in M^!_{5/2, \varrho}$ satisfying \eqref{eq:Gdqexp}. By Lemma \ref{lemma:wt1}, there exists $g \in M^!_{1, \varrho_1}$ such that $\iota_\vartheta(g) = G_d$. Since the principal part of $g$ has integral Fourier coefficients, we can express it as an integral linear combination of the $g_m$'s from Lemma \ref{lemma:wt1family}. Therefore $g$ is contained in $\Mb^!_{1, \varrho_1}$ and $G_d = \iota_\vartheta(g)$ is contained in $\Mb^!_{5/2, \varrho}$. The same proof works for the $F_D$'s.

  The last statement is a simple consequence that every weakly holomorphic modular form of weight 2 on $\SL_2(\Zb)$ has vanishing constant term, as it is the derivative of a modular function.
  If we view $\Cb[\Ac_L] \times \Cb[\Ac_L]$ as an $\SL_2(\Zb)$-module with respect to $\rho_L \otimes {\rho^*_L}$, then the hermitian inner product $\langle \cdot, \cdot \rangle: \Cb[\Ac_L] \times \Cb[\Ac_L] \to \Cb$ in \eqref{eq:lrangle} is $\SL_2(\Zb)$-linear, whose action on $\Cb$ is trivial. Therefore, $\langle F_D, G_d \rangle$ is in $M_2^!$ and its constant term is given by $\sum_{D', d' \in \Zb, D' + d' = 0} A(D, d') B(D', d)$. We are then done by \eqref{eq:Gdqexp} and \eqref{eq:FDqexp}. 
\end{proof}
Since the proof is constructive, we can explicitly give the Fourier expansions of $G_d$ for any particular $d$. 
For example when $d = 1$, the $\ef_0$-component of the modular form 
\begin{equation}
  \label{eq:G1}
 G_{1} = \iota_\vartheta(g_1) = \iota_\vartheta(\Bsc(\tilde{g}_1))
\end{equation}
has the Fourier expansion 
\begin{align*}
\eta(\tau)^3 \Bsc(\tilde{g}_1)_{(1, 0)} &= 
 -4(q^{1/8} + 129 q^{9/8} + 1144 q^{17/8} + 5625 q^{25/8} + O(q^{33/8})),
\end{align*}
and one can check both numerically and from the proof of Theorem \ref{thm:ShimLift} that $\phib(z, G_1) = -4 \phii(z)$. 

\subsection{Hecke operators}
In \cite[Theorem 4.10]{BS10} Hecke operators on vector-valued modular forms are defined. They act on $G\in M_{5/2,\rho}^!$ as follows. If
\[
G(\tau)=\sum_{h\in\mathcal{A}} \ef_{h} \sum_{n\in\Zb}c_h(n)q^{n/8} 
\]
and  $p$ is an odd prime, then multiplying by $p$ on $\Ac$ is an isometry and
\[
(G|T_{p^2})(\tau) = \sum_{h\in\mathcal{A}}\ef_h \sum_{n\in \Zb
}b_h(n) q^{n/8},~
b_h(n) := 
c_{ph}(p^2n)+
p\left(\frac{n}{p}\right)c_h(n)+p^{3}c_{p^{-1} h}(n/p^2),
\]
with $\left(\frac{n}{p}\right)$ the Kronecker symbol.
It is easily checked that multiplying by $p$ acts as either the identity or $\nu$ on $\Ac$. Therefore, we have $\varpi(p \cdot v) = \varpi(v)$ for any $v \in \Cb[\Ac]$ and $T_{p^2}$ preserves the subspace $M^{!}_{5/2, \varrho} \subset M^!_{5/2, \rho}$ and the lattice $\Mb^{!}_{5/2, \varrho} \subset M^!_{5/2, \varrho}$.
Given $G(\tau) = \sum_{n \in \Zb} c(G, n)\mathfrak{q}^n$ in $M^!_{5/2, \varrho}$, the Hecke operator $T_{p^2}$ then acts as
\begin{equation}
  \label{eq:Heckep}
(G|T_{p^2})(\tau) = \sum_{n \in \Zb}
\left(c(G, p^2n)+
p\left(\frac{n}{p}\right)c(G, n)+p^{3}c(G, n/p^2) \right) \mathfrak{q}^n.
\end{equation}
Using Prop.\ \ref{prop:family} and comparing the principal parts, we can deduce the equality
\begin{equation}
  \label{eq:Heckeeq}
  G_d \mid T_{p^2} = p^3 G_{p^2d} + p \left(\frac{-d}{p}\right) G_d + G_{d/p^2}
\end{equation}
for any $G_d \in \Mb^!_{5/2, \varrho}$, where $G_{d/p^2}$ is zero if $p^2 \nmid d$. 
This leads to the following result, which implies the second half of Theorem \ref{thm:HeckeSc}
\begin{prop}
  \label{prop:divisibility}
Let $G_d(\tau) = \sum_{n \in \Zb} B(n, d) \mathfrak{q}^n \in \Mb^!_{5/2, \varrho}$ be as in Prop.\ \ref{prop:family}. Suppose $d \equiv 1, 3, 7\bmod{8}$ is square-free. Then 
\begin{equation}
  \label{eq:divisibility}
B(p^2 n, d) \equiv p \left( \left( \frac{-d}{p} \right) - \left( \frac{n}{p} \right) \right) B(n, d) \bmod{p^3}
\end{equation}
for any $n \in \Nb$ and odd prime $p$. 
In particular, we have $B(m^2, d) \in m\Zb$ for all $m \in \Nb$. 
\end{prop}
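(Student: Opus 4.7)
My plan is to derive both statements from the Hecke identity \eqref{eq:Heckeeq}, with the congruence being essentially a rearrangement of it and the ``in particular'' claim requiring an induction that iterates the Hecke operator.

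For the congruence, I will use square-freeness of $d$ to make $G_{d/p^2} = 0$, so that \eqref{eq:Heckeeq} collapses to $G_d \mid T_{p^2} = p^3 G_{p^2 d} + p \chi\, G_d$ with $\chi := \left(\frac{-d}{p}\right)$. Equating the $n$-th Fourier coefficients of both sides via \eqref{eq:Heckep} rearranges to
\begin{equation*}
B(p^2 n, d) - p\left(\chi - \left(\frac{n}{p}\right)\right) B(n, d) = p^3 \left(B(n, p^2 d) - B(n/p^2, d)\right) \in p^3 \Zb,
\end{equation*}
which is exactly \eqref{eq:divisibility}.

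For the divisibility claim I will induct on $m$, and by the Chinese Remainder Theorem reduce to showing $p^a \mid B(m^2, d)$ for each odd prime $p$ dividing $m$, where $a := v_p(m)$; write $m = p^a m'$ with $\gcd(p, m') = 1$. The key step is to iterate \eqref{eq:Heckeeq} via the Chebyshev-type recursion $T_{p^{2r+2}} = T_{p^2} T_{p^{2r}} - p^3 T_{p^{2r-2}}$ for the Bruinier--Stein operators. A routine induction on $r$ yields the closed form
\begin{equation*}
G_d \mid T_{p^{2r}} = \sum_{j=0}^{r} \chi^{r+j} p^{r+2j} G_{p^{2j} d}
\end{equation*}
when $\gcd(p, d) = 1$ (respectively $G_d \mid T_{p^{2r}} = p^{3r} G_{p^{2r} d}$ when $p \mid d$), while the same recursion applied to \eqref{eq:Heckep} gives, for $\gcd(p, n) = 1$, an expansion $(G \mid T_{p^{2r}})_n = \sum_{i=0}^{r} \pi_{r,i,n}\, c(G, p^{2i} n)$ with $\pi_{r,r,n} = 1$ and $v_p(\pi_{r,i,n}) \geq r - i$.

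Taking $r = a$ and $n = (m')^2$ in both expressions for $G_d \mid T_{p^{2a}}$, I will isolate $B(m^2, d) = B(p^{2a}(m')^2, d)$ as a $\Zb$-linear combination of terms $-\pi_{a, i, (m')^2} B(p^{2i}(m')^2, d)$ for $i < a$ and $\chi^{a+j} p^{a+2j} B((m')^2, p^{2j} d)$ for $0 \le j \le a$. The first family has $v_p \geq (a - i) + i = a$, using the inductive hypothesis $p^i \mid B(p^{2i}(m')^2, d)$ (valid since $p^i m' < m$) combined with the coefficient bound on $\pi_{a,i}$; the second has $v_p \geq a + 2j \geq a$ directly. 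Hence $p^a \mid B(m^2, d)$, and CRT over the odd primes dividing $m$ completes the induction. The main technical obstacle will be establishing the valuation bound $v_p(\pi_{r,i,n}) \geq r - i$: this is classical in the scalar half-integral weight setting (originally due to Shimura) and should propagate cleanly through the Chebyshev recursion to Bruinier--Stein's vector-valued operators.
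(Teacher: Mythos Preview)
Your derivation of \eqref{eq:divisibility} is identical to the paper's: drop $G_{d/p^2}$ from \eqref{eq:Heckeeq} using square-freeness of $d$ and read off the $n$-th Fourier coefficient via \eqref{eq:Heckep}.

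For the ``in particular'' the paper says only that the claim follows from the congruence, together with the remark that $B(m,d)=0$ whenever $2\mid m$. Your treatment is substantially more detailed, and with reason: the congruence \eqref{eq:divisibility} is only modulo $p^3$, so the naive bootstrap $p^{a-1}\mid B\bigl((m/p)^2,d\bigr)\Rightarrow p^a\mid B(m^2,d)$ stalls once $a\ge 4$. Passing to the higher operators $T_{p^{2r}}$ via the recursion $T_{p^{2r+2}} = T_{p^2}T_{p^{2r}} - p^3 T_{p^{2r-2}}$ replaces the congruence by exact identities, and your two closed formulas --- for $G_d\mid T_{p^{2r}}$ as a combination of the $G_{p^{2j}d}$, and for the coefficient expansion with $v_p(\pi_{r,i,n})\ge r-i$ --- are correct and check easily by induction on $r$ (in fact $\pi_{r,i,n}=\bigl(p(n/p)\bigr)^{r-i}$ when $\gcd(p,n)=1$). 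So your argument is sound and supplies what the paper leaves to the reader.

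One small omission in your CRT reduction: restricting to odd primes dividing $m$ is only legitimate once you note, as the paper does, that $B(m^2,d)=0$ for even $m$ (since then $m^2\not\equiv 1,5,7\pmod 8$), which handles the $2$-part of $m$ trivially.
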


\begin{proof}
  Since $d$ is square-free, \eqref{eq:Heckeeq} becomes $  G_d \mid T_{p^2} = p^3 G_{p^2d} + p \left(\frac{-d}{p}\right) G_d$. Comparing the $n$\tth Fourier coefficients of both sides then gives us the congruence \eqref{eq:divisibility}. The last claim then follows since $B(m, d) = 0$ whenever $2\mid m$.
\end{proof}
Proposition \ref{prop:divisibility} now leads to a proof of Conjecture \ref{conj:main}. A more detailed analysis reveals higher power congruences:
\begin{cor}
\label{cor:p^3 congruence}
Let $d$ be square-free and ${\phib}(z, G_d) = \sum_{n \in 2\Nb - 1} a_d(n)q_z^n$.
Then for every odd prime $p$ we have 
\begin{equation}
\label{eq:p3congruence}
a_d(p)\equiv p\left( \frac{d}{p} \right) B(1, d)\bmod p^3.
\end{equation}
\end{cor}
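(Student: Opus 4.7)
The plan is to compute $a_d(p)$ directly from the definition of $\phib$ in \eqref{eq:Psi}, reduce the expression for $n=p$ prime to just two terms using divisor structure, and then invoke Proposition \ref{prop:divisibility} to replace the single nontrivial coefficient $B(p^2,d)$ by a congruence in terms of $B(1,d)$.

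First I would unfold \eqref{eq:Psi} applied to $G_d$, whose coefficients are $c(G_d, m) = B(m,d)$, to get
\begin{equation*}
a_d(n) = (-1)^{(n-1)/2} \sum_{r \mid n} r \cdot B((n/r)^2, d).
\end{equation*}
For $n=p$ an odd prime, only $r=1$ and $r=p$ contribute, giving
\begin{equation*}
a_d(p) = (-1)^{(p-1)/2} \bigl( B(p^2, d) + p \, B(1, d) \bigr).
\end{equation*}

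Next I would apply Proposition \ref{prop:divisibility} with $n=1$, which, since $\left(\tfrac{1}{p}\right) = 1$, yields
\begin{equation*}
B(p^2, d) \equiv p \left( \left(\tfrac{-d}{p}\right) - 1 \right) B(1,d) \pmod{p^3}.
\end{equation*}
Substituting and noting that the $-p$ cancels the $+p$ from the $r=p$ term, I obtain
\begin{equation*}
a_d(p) \equiv (-1)^{(p-1)/2} \, p \left(\tfrac{-d}{p}\right) B(1,d) \pmod{p^3}.
\end{equation*}

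Finally I would use the standard identity $(-1)^{(p-1)/2} = \left(\tfrac{-1}{p}\right)$ and multiplicativity of the Kronecker symbol to rewrite $(-1)^{(p-1)/2}\left(\tfrac{-d}{p}\right) = \left(\tfrac{-1}{p}\right)^2 \left(\tfrac{d}{p}\right) = \left(\tfrac{d}{p}\right)$, which produces the claimed congruence. There is no real obstacle here; the whole content is already contained in Proposition \ref{prop:divisibility}, and the corollary is essentially the $n=1$ specialization of that result after accounting for the $r=p$ boundary term in the divisor sum defining $\phib$ and the Kronecker sign $(-1)^{(p-1)/2}$.
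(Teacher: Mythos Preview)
Your proof is correct and follows essentially the same route as the paper's: compute $a_d(p)$ from \eqref{eq:Psi} as $(-1)^{(p-1)/2}\bigl(B(p^2,d)+p\,B(1,d)\bigr)$, apply Proposition~\ref{prop:divisibility} with $n=1$, and simplify using $(-1)^{(p-1)/2}=\left(\tfrac{-1}{p}\right)$ together with multiplicativity of the Kronecker symbol.
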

\begin{proof}
The coefficient $a_d(p)$ is given by \eqref{eq:Psi} and equals $(-1)^{(p-1)/2}(p\cdot B(1, d) + B(p^2, d))$. By Prop.\ \ref{prop:divisibility} we have
$B(p^2, d) \equiv p\left( \left( \frac{-d}{p} \right) - 1 \right) B(1, d) \bmod{p^3}$ and \eqref{eq:p3congruence} follows from $(-1)^{(p-1)/2}=\left(\frac{-1}{p}\right)$ and the multiplicativity of the Kronecker symbol.\end{proof}

Finally, we record some examples of the bases. Two instances of Zagier duality are marked in color. 
\begin{align*}  G_1(\tau) &= \mathfrak{q}^{-1} \blue{-4}\mathfrak{q}  \blue{+112}\mathfrak{q}^{5} \blue{+19}\mathfrak{q}^{7} \blue{-516}\mathfrak{q}^{9} \blue{+1712}\mathfrak{q}^{13}  \blue{-87} \mathfrak{q}^{15}+ O(q^{16}),\\
  G_3(\tau) &= \mathfrak{q}^{-3} \red{- 4}\mathfrak{q} \red{- 267}\mathfrak{q}^{5} \red{+ 1024}\mathfrak{q}^{7} \red{- 3012}\mathfrak{q}^{9} \red{- 19666}\mathfrak{q}^{13} \red{+ 44032} \mathfrak{q}^{15} + O(q^{16}),\\
    G_7(\tau) &= \mathfrak{q}^{-7} - 7\mathfrak{q} - 3136\mathfrak{q}^{5} - 20480\mathfrak{q}^{7} - 102396\mathfrak{q}^{9} - 1546048\mathfrak{q}^{13}  - 5074944 \mathfrak{q}^{15} + O(q^{16}), \\                          
  G_9(\tau) &= \mathfrak{q}^{-9}- 20\mathfrak{q} + 16944\mathfrak{q}^{5} - 172\mathfrak{q}^{7} - 854548\mathfrak{q}^{9} + 18047344\mathfrak{q}^{13}  + 5031 \mathfrak{q}^{15} + O(q^{16}),\\
  G_{11}(\tau) &= \mathfrak{q}^{-11}- 12\mathfrak{q} - 21303\mathfrak{q}^{5} + 216064\mathfrak{q}^{7} - 1566540\mathfrak{q}^{9} - 44627503\mathfrak{q}^{13} + 193840128 \mathfrak{q}^{15} + O(q^{16}),\\
  G_{15}(\tau) &= \mathfrak{q}^{-15}- 25\mathfrak{q} - 111552\mathfrak{q}^{5} - 1617920\mathfrak{q}^{7} - 15953955\mathfrak{q}^{9} - 770664640\mathfrak{q}^{13} - 4226125824 \mathfrak{q}^{15} + O(q^{16}).
\end{align*}

\begin{align*}
  F_{1}(\tau) &=   \tilde{\mathfrak{q}}^{-1}  \blue{+4}\tilde{\mathfrak{q}} \red{+4}\tilde{\mathfrak{q}}^{3} + 7\tilde{\mathfrak{q}}^{7} + 20\tilde{\mathfrak{q}}^{9} + 12\tilde{\mathfrak{q}}^{11} + 25\tilde{\mathfrak{q}}^{15} + O(q^{16}), \\
  F_{5}(\tau) &=   \tilde{\mathfrak{q}}^{-5} \blue{-112}\tilde{\mathfrak{q}} \red{+267}\tilde{\mathfrak{q}}^{3} + 3136\tilde{\mathfrak{q}}^{7} - 16944\tilde{\mathfrak{q}}^{9} + 21303\tilde{\mathfrak{q}}^{11} + 111552\tilde{\mathfrak{q}}^{15} + O(q^{16}),\\
  F_{7}(\tau) &=   \tilde{\mathfrak{q}}^{-7} \blue{-19}\tilde{\mathfrak{q}} \red{- 1024}\tilde{\mathfrak{q}}^{3} + 20480\tilde{\mathfrak{q}}^{7} + 172\tilde{\mathfrak{q}}^{9} - 216064\tilde{\mathfrak{q}}^{11} + 1617920\tilde{\mathfrak{q}}^{15} + O(q^{16})
 \\
  F_{9}(\tau) &=   \tilde{\mathfrak{q}}^{-9} \blue{+ 516}\tilde{\mathfrak{q}} \red{+ 3012}\tilde{\mathfrak{q}}^{3} + 102396\tilde{\mathfrak{q}}^{7} + 854548\tilde{\mathfrak{q}}^{9} + 1566540\tilde{\mathfrak{q}}^{11} + 15953955\tilde{\mathfrak{q}}^{15} + O(q^{16}),
\\
  F_{13}(\tau) &=   \tilde{\mathfrak{q}}^{-13}\blue{- 1712}\tilde{\mathfrak{q}} \red{+ 19666}\tilde{\mathfrak{q}}^{3} + 1546048\tilde{\mathfrak{q}}^{7} - 18047344\tilde{\mathfrak{q}}^{9} + 44627503\tilde{\mathfrak{q}}^{11} + 770664640\tilde{\mathfrak{q}}^{15} + O(q^{16}),
\\
  F_{15}(\tau) &=   \tilde{\mathfrak{q}}^{-15} \blue{+ 87}\tilde{\mathfrak{q}} \red{- 44032}\tilde{\mathfrak{q}}^{3} + 5074944\tilde{\mathfrak{q}}^{7} - 5031\tilde{\mathfrak{q}}^{9} - 193840128\tilde{\mathfrak{q}}^{11} + 4226125824\tilde{\mathfrak{q}}^{15} + O(q^{16}).
\end{align*}

\bibliography{magnetic.bib}{}

\providecommand{\bysame}{\leavevmode\hbox to3em{\hrulefill}\thinspace}
\providecommand{\MR}{\relax\ifhmode\unskip\space\fi MR }
\providecommand{\MRhref}[2]{%
  \href{http://www.ams.org/mathscinet-getitem?mr=#1}{#2}
}
\providecommand{\href}[2]{#2}
\begin{thebibliography}{10}

\bibitem{AtkinLehner}
A.~O.~L. Atkin and Joseph Lehner, \emph{Hecke operators on {$\Gamma _{0}(m)$}},
  Math. Ann. \textbf{185} (1970), 134--160. \MR{0268123}

\bibitem{Ausserlechner2016}
Udo Ausserlechner, \emph{A method to compute the hall-geometry factor at weak
  magnetic field in closed analytical form}, Electrical Engineering \textbf{98}
  (2016), no.~3, 189--206.

\bibitem{Borcherds98}
Richard~E. Borcherds, \emph{Automorphic forms with singularities on
  {G}rassmannians}, Invent. Math. \textbf{132} (1998), no.~3, 491--562.
  \MR{1625724 (99c:11049)}

\bibitem{BZ17}
David Broadhurst and Wadim Zudilin, \emph{A magnetic double integral}, 2017.

\bibitem{BO10}
Jan Bruinier and Ken Ono, \emph{Heegner divisors, {$L$}-functions and harmonic
  weak {M}aass forms}, Ann. of Math. (2) \textbf{172} (2010), no.~3,
  2135--2181. \MR{2726107}

\bibitem{BF04}
Jan~Hendrik Bruinier and Jens Funke, \emph{On two geometric theta lifts}, Duke
  Math. J. \textbf{125} (2004), no.~1, 45--90. \MR{2097357 (2005m:11089)}

\bibitem{BS10}
Jan~Hendrik Bruinier and Oliver Stein, \emph{The {W}eil representation and
  {H}ecke operators for vector valued modular forms}, Math. Z. \textbf{264}
  (2010), no.~2, 249--270. \MR{2574974}

\bibitem{ChoieLim16}
Youngju Choie and Subong Lim, \emph{Shimura lifting on weak {M}aass forms},
  Acta Arith. \textbf{173} (2016), no.~1, 1--18. \MR{3494131}

\bibitem{DukeJenkins08}
W.~Duke and Paul Jenkins, \emph{On the zeros and coefficients of certain weakly
  holomorphic modular forms}, Pure Appl. Math. Q. \textbf{4} (2008), no.~4,
  Special Issue: In honor of Jean-Pierre Serre. Part 1, 1327--1340.
  \MR{2441704}

\bibitem{Edixhoven05}
Bas Edixhoven, \emph{On the {$p$}-adic geometry of traces of singular moduli},
  Int. J. Number Theory \textbf{1} (2005), no.~4, 495--497. \MR{2196791}

\bibitem{GlasserZhou18}
M.~L. Glasser and Yajun Zhou, \emph{A functional identity involving elliptic
  integrals}, Ramanujan J. (2017).

\bibitem{GrossKohnenZagier}
B~Gross, Winfried Kohnen, and Don Zagier, \emph{Heegner points and derivatives
  ofl-series. ii}, Mathematische Annalen \textbf{278} (1987), no.~1-4,
  497--562.

\bibitem{Jenkins05}
Paul Jenkins, \emph{{$p$}-adic properties for traces of singular moduli}, Int.
  J. Number Theory \textbf{1} (2005), no.~1, 103--107. \MR{2172334}

\bibitem{KazalickiScholl}
Matija Kazalicki and Anthony~J. Scholl, \emph{Modular forms, de {R}ham
  cohomology and congruences}, Trans. Amer. Math. Soc. \textbf{368} (2016),
  no.~10, 7097--7117. \MR{3471086}

\bibitem{LiZemel17}
Yingkun Li and Shaul Zemel, \emph{Shimura lifts of weakly holomorphic modular
  forms}, 2015.

\bibitem{McGraw03}
William~J. McGraw, \emph{The rationality of vector valued modular forms
  associated with the {W}eil representation}, Math. Ann. \textbf{326} (2003),
  no.~1, 105--122. \MR{1981614}

\bibitem{Niwa75}
Shinji Niwa, \emph{Modular forms of half integral weight and the integral of
  certain theta-functions}, Nagoya Math. J. \textbf{56} (1975), 147--161.
  \MR{0364106}

\bibitem{Shimura73}
Goro Shimura, \emph{On modular forms of half integral weight}, Ann. of Math.
  (2) \textbf{97} (1973), 440--481. \MR{0332663 (48 \#10989)}

\bibitem{Shintani75}
Takuro Shintani, \emph{On construction of holomorphic cusp forms of half
  integral weight}, Nagoya Math. J. \textbf{58} (1975), 83--126. \MR{0389772}

\bibitem{sagemath}
{The Sage Developers}, \emph{{S}agemath, the {S}age {M}athematics {S}oftware
  {S}ystem ({V}ersion 8.1)}, 2017, {\tt http://www.sagemath.org}.

\bibitem{Weisinger1977}
James Weisinger, \emph{Some results on classical {E}isenstein series and
  modular forms over function fields}, ProQuest LLC, Ann Arbor, MI, 1977,
  Thesis (Ph.D.)--Harvard University. \MR{2940742}

\bibitem{Za98}
Don Zagier, \emph{Traces of singular moduli}, Motives, polylogarithms and
  {H}odge theory, {P}art {I} ({I}rvine, {CA}, 1998), Int. Press Lect. Ser.,
  vol.~3, Int. Press, Somerville, MA, 2002, pp.~211--244. \MR{1977587}

\bibitem{Zemel17a}
Shaul Zemel, \emph{Normalizers of congruence groups in {${\rm SL}_2(\Bbb{R})$}
  and automorphisms of lattices}, Int. J. Number Theory \textbf{13} (2017),
  no.~5, 1275--1300. \MR{3639697}

\bibitem{ZhangYC16}
Yichao Zhang, \emph{Divisibility properties for weakly holomorphic modular
  forms with sign vectors}, Int. J. Number Theory \textbf{12} (2016), no.~8,
  2107--2123. \MR{3562016}

\end{thebibliography}
\bibliographystyle{amsplain}

\end{document}